\newcommand{\eq}[2]{\begin{equation}\label{#1}#2 \end{equation}}
\newcommand{\ml}[2]{\begin{multline}\label{#1}#2 \end{multline}}
\newcommand{\ga}[2]{\begin{gather}\label{#1}#2 \end{gather}}
\newcommand{\et}{{\rm{\acute{e}t}}}
\newcommand{\ab}{\mathrm{ab}}
\theoremstyle{plain}
\newtheorem{theorem}{Theorem}[section]
\newtheorem{proposition}[theorem]{Proposition}
\newtheorem{corollary}[theorem]{Corollary}
\newtheorem{lemma}[theorem]{Lemma}
\theoremstyle{definition}
\newtheorem{definition}[theorem]{Definition}
\newtheorem{remark}[theorem]{Remark}
\newtheorem{question}[theorem]{Question}
\newtheorem{ass}[theorem]{Assumption}
\newtheorem{data}[theorem]{Data}
\newcommand{\CC}{\mathbb{C}}
\newcommand{\QQ}{\mathbb{Q}}
\newcommand{\RR}{\mathbb{R}}
\newcommand{\ZZ}{\mathbb{Z}}
\newcommand{\FF}{\mathbb{F}}
\newcommand{\sF}{\mathcal{F}}
\newcommand{\sV}{\mathcal{V}}
\newcommand{\tr}{\mathrm{tr}}
\newcommand{\sO}{\mathcal{O}}
\newcommand{\Mg}{{\rm{Char}}}
\newcommand{\cont}{{\rm cont}}
\newcommand{\QQp}{\overline{\QQ}_p}
\newcommand{\QQpp}{\overline{\QQ}_{p'}}
\newcommand{\ZZp}{\overline{\ZZ}_p}
\renewcommand{\setminus}{\smallsetminus}
\DeclareMathOperator{\End}{End}
\DeclareMathOperator{\Hom}{Hom{}}
\DeclareMathOperator{\Spec}{Spec}
\let\lim=\relax
\DeclareMathOperator*{\lim}{lim}
\let\dim=\relax
\DeclareMathOperator*{\im}{im}
\DeclareMathOperator*{\dim}{dim}
\DeclareMathOperator*{\colim}{colim}
\begin{document}

\title[Moduli spaces rank one]{Arithmetic  subspaces of moduli spaces of rank one local systems }
\author{H\'el\`ene Esnault \and Moritz Kerz}
\address{Freie Universit\"at Berlin, Arnimallee 3, 14195, Berlin,  Germany}
\email{esnault@math.fu-berlin.de}
\address{   Fakult\"at f\"ur Mathematik \\
Universit\"at Regensburg \\
93040 Regensburg, Germany}
\email{moritz.kerz@mathematik.uni-regensburg.de}
\thanks{The first  author is supported by  the Einstein program,  the second author by the SFB 1085 Higher Invariants}

\begin{abstract} We show that closed subsets of the character variety of a complex variety
  with negatively weighted  first homology, which are $p$-adically integral
  and  stabilized by an arithmetic Galois action, are motivic.
\end{abstract}

\maketitle

\section{Introduction}\label{sec:intro}

Let $F \subset \CC$ be a subfield of the field of complex numbers, $\overline F$ be its
algebraic closure in $\CC$, $X$ be a separated  geometrically connected scheme of finite
type over $F$. Let $\Pi$ be the abelianized fundamental group $\pi_1^{\ab}(X(\CC))=H_1(X(\CC)
,\ZZ )$. The $\QQ$-vector space 
$\Pi_\QQ=\Pi\otimes_\ZZ \QQ$ is endowed with a mixed Hodge structure with weights in
$\{-2,-1,0\}$.
The weights of $\Pi_\QQ$ are in $\{-2,-1\}$ if and only if for any open dense subscheme
$U\subset X$ the map $\pi_1^\ab(U(\CC))_\QQ\to \Pi_\QQ$ is surjective,  see
  Lemma~\ref{lem:weight}.
 For example  this occurs
 if $X$ is irreducible and geometrically unibranch.

 For any ring $A$, the rank one $A$-linear local systems on $X(\CC )$ are in
bijection with $ \Hom(\Pi,A^\times)$. The functor
\[
B\mapsto  \Hom(\Pi,B^\times)
\]
on the category of $A$-algebras  is corepresentable by the group ring $A[\Pi]$, so
 the {\em character variety} 
$$\Mg^\Pi_A=\Spec (A[\Pi])$$ is the fine moduli space of rank one $A$-linear local systems on $X(\CC
)$ and  is a multiplicative affine group scheme over $A$.

In this note we study closed subsets of $\Mg^\Pi_{\QQp}$ which are integral and stabilized
by an arithmetic Galois group in a suitable sense. In fact we show that those subsets are
necessarily of a very special form, namely they consist of torsion translated motivic
subtori. By a {\em motivic subtorus} of $\Mg^\Pi_A$ we mean a subgroup scheme,
 which roughly speaking is the character variety of a $1$-motive. More
  precisely, it is a subgroup scheme of the form
$\Mg^{\Pi'}_A \hookrightarrow\Mg^\Pi_A $, where $\Pi'$ is a torsion free quotient of $\Pi$
such that $\Pi'_\QQ$ underlies a quotient mixed Hodge structure of $\Pi_\QQ$, see
Proposition~\ref{prop:mot} for equivalent definitions.

 Our work is motivated by Simpson's classical article \cite{Sim93}, in which he
studies  closed algebraic subsets of $\Mg^\Pi_{\CC}$   which remain algebraic after applying the
Riemann-Hilbert correspondence. One of his central
results reads as follows.
When $X$ is smooth projective, there is  also a fine moduli space ${\rm Pic}^\nabla(X)$ of rank one integrable connections on $X$. The Riemann-Hilbert correspondence yields an  isomorphism of complex Lie groups
$$ {\rm RH}:  \Mg^\Pi_\CC (\CC)\simeq {\rm Pic}^\nabla(X)(\CC)$$
 which is not an algebraic isomorphism if the dimension is at least one.
In \cite[Thm.~3.1 (c)]{Sim93} he  proves that if $S \subset \Mg^\Pi_\CC$ is a
Zariski closed irreducible  subset such that ${\rm RH} (S(\CC))$ is again Zariski closed,
then $S$  is a motivic subtorus translated by a  character $\chi$.  When  $F=\overline{\QQ
}\subset \CC$, $S$ and ${\rm RH}(S)$ are defined over $F$, then $\chi$ can be chosen to be
a  torsion character  \cite[Thm.3.3]{Sim93}.

\smallskip

In order to formulate our main result, Theorem~\ref{thm:main} below, we
  have to introduce a Galois action related to the character variety.
Let $\pi$ be the profinite completion of $\Pi$.

\begin{ass}
From now on we assume that $F$ is finitely generated. 
\end{ass}

 Let $G={\rm Aut}(\overline F/F)$ be its Galois group.
As $\pi$ is equal to $\pi_1^{\et, \rm ab}( X_{\overline F})$,  $G$ acts on $\pi$.
 We consider  the group  ${\rm Hom}_{\rm cont} ( \pi, \overline \QQ_p^\times)$ of continuous homomorphisms, 
which  by  definition  is the colimit of $\Hom_\cont (\pi ,E^\times)$ over all finite
extensions $E\supset \QQ_p $ inside $\QQp$. The homomorphism $\Pi\to \pi$ induces
a composite map
\ml{}{ \varphi : {\rm Hom}_{\rm cont} ( \pi, \overline  \QQ^\times_p) ={\rm Hom}_{\rm cont} ( \pi, \overline {\ZZ}^\times_p)
= {\rm Hom}( \Pi, \overline \ZZ^\times_p)\\  \hookrightarrow
 {\rm Hom}( \Pi, \overline \QQ^\times_p)=\Mg^\Pi_{ \overline \QQ_p} ( \QQp) \to \Mg^\Pi_{\QQp},
 \notag}
where $\overline \ZZ_p$ is the ring of integers of $\overline \QQ_p$.   Our
main theorem is a $p$-adic arithmetic equivalent of Simpson's result recalled above,   which is expressed using the map
$\varphi$.

\begin{theorem} \label{thm:main} Assume that the weights of $\Pi_\QQ$ are in $\{-2,-1\}$.
Let $S\subset \Mg^\Pi_{\overline \QQ_p}$ be a   Zariski closed subset such that $\varphi^{-1}(S)$ is
stabilized by an open subgroup of $G$.
Then
\begin{itemize}
\item[1)]
every irreducible component $S'$ of $S$ such that $\varphi^{-1}(S')$ is non-empty  is a subtorus  $T$  of  $ \Mg^\Pi_{\overline \QQ_p}$
translated by a torsion character;

\item[2)]
  each such $T$ is a motivic subtorus.
\end{itemize}
\end{theorem}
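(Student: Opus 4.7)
\emph{Setup.} I replace $S$ by an irreducible component $S'$ with $\varphi^{-1}(S')\neq\emptyset$ and $G$ by an open subgroup stabilizing $\varphi^{-1}(S')$. Spreading $X$ out to a model $\mathcal X\to U$ over an integral $\ZZ$-scheme of finite type with function field $F$, the group $G$ acts on $\pi\otimes\ZZp=H_1^\et(X_{\overline F},\ZZp)$ through the quotient $\pi_1^\et(U[1/p])$; for each closed point $v$ of $U[1/p]$ of good reduction, the arithmetic Frobenius $\phi_v\in G$ acts on $\pi\otimes\QQp$ with eigenvalues that are Weil $q_v$-numbers of weights $-2$ or $-1$, by Deligne's theorem combined with the weight hypothesis. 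In particular $\phi_v^n-\id$ is invertible on $\pi\otimes\QQp$ for every $n\geq 1$, so every $\phi_v^n$-invariant character in $\Hom_\cont(\pi,\QQp^\times)$ is torsion.

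\emph{Part 1).} By the $G$-stability, the analytic self-map $\alpha_v(\chi)=\chi\circ\phi_v^{-1}$ preserves the compact set $\varphi^{-1}(S')$, and its $\alpha_v^n$-periodic points are torsion characters by the reduction above. The plan is to combine the $p$-adic dynamics of $\alpha_v$ near such periodic points (no unit eigenvalues, hence locally a hyperbolic linear map in suitable coordinates) with Chebotarev variation of the place $v$ and the algebraicity of $S'$ to establish Zariski-density of torsion characters in $\varphi^{-1}(S')$. Laurent's theorem on torsion points in subvarieties of algebraic tori then forces $S'$ to be a torsion translate $\chi_0\cdot T$ of a subtorus $T\subset\Mg^\Pi_{\QQp}$, and since $S'$ contains a torsion character, $\chi_0$ can be chosen to be one.

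\emph{Part 2).} Write $T=\Mg^{\Pi'}_{\QQp}$ for some torsion-free quotient $\Pi\twoheadrightarrow\Pi'$. After enlarging the finite extension of $F$ trivializing $\chi_0$, the subtorus $T$ is itself Galois-stable, so $\Pi'\otimes\QQp$ is a Galois-stable quotient of $H_1^\et(X_{\overline F},\QQp)$. Appealing to the $p$-adic comparison theorem together with the mixed Hodge structure on $\Pi_\QQ$, any such Galois-stable quotient must lift to a quotient mixed Hodge structure of $\Pi_\QQ$; by Proposition~\ref{prop:mot} this exhibits $T$ as a motivic subtorus.

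\emph{Main obstacle.} The key difficulty is the Zariski-density step in Part 1: translating ``$\phi_v^n$-fixed points are torsion'' into ``torsion points are Zariski-dense in $\varphi^{-1}(S')$'' requires genuine $p$-adic equidistribution or rigidity input (in the spirit of Bogomolov--Zhang), since the orbit of a typical non-torsion character does not a priori accumulate at torsion points in the Zariski topology. The Hodge-theoretic lifting in Part 2 is a secondary but nontrivial obstacle, as $X$ is not assumed smooth projective, so one needs $p$-adic Hodge theory in the singular/open setting applied to the abelianized fundamental group.
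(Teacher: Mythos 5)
Your proposal has the right overall shape (produce many torsion characters on $S'$, conclude via Mordell--Lang for tori, then show Galois-stable tori are motivic), but the two steps you yourself flag as ``obstacles'' are precisely the content of the theorem, and the tools you name for them are not the ones that make the argument work. For Part 1), the Zariski-density of torsion points in $S'$ is left entirely open: you propose Frobenius dynamics plus Chebotarev plus ``$p$-adic equidistribution/rigidity in the spirit of Bogomolov--Zhang'', but no such equidistribution statement is formulated or proved, and the dynamical heuristic is misleading --- for $v\nmid p$ the Frobenius eigenvalues on $\pi^{(p)}_{\QQ_p}$ are $p$-adic units, so the action near a fixed point is far from ``hyperbolic'' in the $p$-adic metric and orbits of non-torsion characters have no reason to accumulate at torsion points. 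The paper avoids equidistribution altogether: by Bogomolov--Litt there is a \emph{single} element $\sigma\in G$ acting on $\pi^{(p)}_{\QQ_p}$ as a weighted homothety (by $\alpha^{-2}$ on the weight $-2$ part and $\alpha^{-1}$ on the weight $-1$ part, with $\alpha\in\ZZ_p^\times$ not a root of unity); after applying $[p^n]$ and the $p$-adic logarithm, $\sigma$-stability forces $\varphi^{-1}(S')$ to be \emph{conic} near a Teichm\"uller-translated torsion point by a Strassmann-type one-variable argument (Lemma~\ref{lem:conic}, Proposition~\ref{prop:globaltorsion}), and a reduction-mod-$p$ specialization then gives Zariski-density of torsion points (Corollary~\ref{cor:globaltorsion}). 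Note also that the paper does not even need Laurent's theorem for the main statement: the same $\sigma$ yields linearity of $\varphi^{-1}(S')$ at a smooth torsion point (Proposition~\ref{prop:lin}, Proposition~\ref{prop:globallin}), which gives the subtorus conclusion directly (Theorem~\ref{thm:critsubtorus}); the route via Laurent is only mentioned as an alternative (Remark~\ref{rmk.morlan}), and the Frobenius-based argument you gesture at is close in spirit to the paper's Section~\ref{sec.alter}, which however uses de Jong's finiteness of Frobenius coinvariants on an integral model and only proves a weaker statement (for $S$ defined over a number field and infinitely many embeddings), not Theorem~\ref{thm:main} as stated.

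For Part 2) there is a second gap: the assertion that ``any Galois-stable quotient must lift to a quotient mixed Hodge structure'' is not a consequence of the $p$-adic comparison theorem; it is exactly a Tate-conjecture statement. The paper proves it by applying Faltings' theorem on homomorphisms of abelian varieties (extended to semi-abelian varieties via Mordell--Weil) to the Serre Albanese of $X^{\mathrm{reg}}$: a $\QQ$-rational, Galois-stable subspace of $V_p^M$ comes from a semi-abelian subvariety (Proposition~\ref{prop:constate}), and this is what feeds into the equivalence of Proposition~\ref{prop:mot}. Without invoking Faltings (or an equivalent input), your Part 2) does not close. So while the skeleton is reasonable, both of the theorem's essential steps remain unproven in the proposal.
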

 {
  The condition on weights in   Theorem~\ref{thm:main}   is optimal in the sense that the
  weight zero part of $\Pi_\QQ$ describes   the homology of the dual complex of 
 the
  configuration of irreducible components of $X$.  For example if $X$ is a rational double point curve defined over $\QQ$,  with normalization $X'\to X$ such that the two points above the node are still rational, then  ${\rm Hom}_{\rm cont} ( \pi, 
  \QQpp^\times) =\overline \ZZ_p^\times$ on which $G$ acts as the identity, and 
  \[\varphi: \overline \ZZ_p^\times \to \Mg^\Pi_{ \overline \QQ_p}= \mathbb{G}_{m, \overline \QQ_p}.\] So taking $S$ to be the image by $\varphi$ of a 
    non-torsion element  in $
  \overline \ZZ_p^\times $  yields a counter-example. 

Note that  by the characterization of motivic subtori in
  Proposition~\ref{prop:mot}, the converse of Theorem~\ref{thm:main} also holds, i.e.\ for a closed subset
$S\subset \Mg^\Pi_{\QQp}$ satisfying 1) and 2) of the theorem there exists an open
subgroup   of $G$ stabilizing $\varphi^{-1}(S)$.

\medskip

Theorem~\ref{thm:main} immediately implies the following independence of $p$ result. Let
$p'$ be a prime number and let
$\iota:\QQp \xrightarrow{\simeq} \QQpp$ be an isomorphism of fields. It induces an
isomorphism of schemes $\iota:\Mg^\Pi_{\QQp} \xrightarrow{\simeq}\Mg^\Pi_{\QQpp},$  so in particular, if $S
 \hookrightarrow \Mg^\Pi_{\QQp}$ is a closed subset, $\iota(S)  \hookrightarrow \Mg^\Pi_{\QQpp}$ is a closed subset as well.
In the same way as
$\varphi$ one defines 
\[
  \varphi': {\rm Hom}_{\rm cont} ( \pi,
  \QQpp^\times) \to \Mg^\Pi_{\QQpp}.
\]

\begin{corollary}\label{cor:indp}
  With the assumptions as in Theorem~\ref{thm:main},
let $S\subset \Mg^\Pi_{\QQp}$ be a closed subset such that $\varphi^{-1}(S)$
is  stabilized by an open subgroup of $G$ and let $S'$ be an irreducible component of $S$
such that $\varphi^{-1}(S')$ is non-empty. Then $(\varphi')^{-1}(\iota(S'))$
is non-empty and stabilized
by an open subgroup of $G$.
\end{corollary}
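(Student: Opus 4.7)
The plan is to deduce Corollary~\ref{cor:indp} directly from Theorem~\ref{thm:main} together with its converse, exploiting the fact that the notion of a torsion-translated motivic subtorus of $\Mg^\Pi$ is intrinsic to $\Pi$ and its mixed Hodge structure, hence unaffected by the field isomorphism $\iota$.

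First I would apply Theorem~\ref{thm:main} to the irreducible component $S'$. Since $\varphi^{-1}(S)$ is stabilized by an open subgroup of $G$, so is $\varphi^{-1}(S')$, and by hypothesis it is non-empty; parts 1) and 2) therefore produce a torsion character $\chi \colon \Pi \to \overline\QQ_p^\times$ and a motivic subtorus $T \subseteq \Mg^\Pi_{\overline\QQ_p}$ with $S' = T \cdot \chi$. By Proposition~\ref{prop:mot}, $T$ is the base change to $\overline\QQ_p$ of a subgroup scheme of $\Mg^\Pi_\ZZ$ determined by a torsion-free quotient $\Pi \to \Pi'$ whose rationalization underlies a quotient mixed Hodge structure of $\Pi_\QQ$; in particular the data defining $T$ are independent of $p$. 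Consequently $\iota(T)$ is the base change of the same subgroup scheme to $\QQpp$, hence again a motivic subtorus, while $\iota \circ \chi$ is again a torsion character, so $\iota(S') = \iota(T) \cdot (\iota \circ \chi)$ remains a torsion-translated motivic subtorus.

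For the remaining conclusions: for non-emptiness, note that torsion characters take values in roots of unity, which lie in $\overline\ZZ_{p'}^\times$; hence $\iota \circ \chi$ defines an element of $\hom(\Pi, \overline\ZZ_{p'}^\times) = \hom_\cont(\pi, \QQpp^\times)$ mapping under $\varphi'$ to $\iota \circ \chi \in \iota(S')$, which witnesses $(\varphi')^{-1}(\iota(S')) \neq \emptyset$. For the stabilization statement, $\iota(S')$ satisfies conditions 1) and 2) of Theorem~\ref{thm:main} over $\QQpp$, and the converse direction recorded in the remark after that theorem (which itself relies on Proposition~\ref{prop:mot}) supplies an open subgroup of $G$ stabilizing $(\varphi')^{-1}(\iota(S'))$. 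Since all substantive content is contained in Theorem~\ref{thm:main} and its converse, the only real point to verify is that being a torsion-translated motivic subtorus is intrinsic to $\Pi$, so no genuine obstacle arises in the corollary itself.
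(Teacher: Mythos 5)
Your argument is correct and is essentially the deduction the paper intends: Corollary~\ref{cor:indp} is stated there as an immediate consequence of Theorem~\ref{thm:main}, with the stabilization of $(\varphi')^{-1}(\iota(S'))$ coming from the converse recorded after the theorem via Proposition~\ref{prop:mot} (whose characterization of motivic subtori is prime-independent), and non-emptiness from the torsion translate lying in the image of $\varphi'$ --- exactly as you write.
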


We do not even know conjecturally what a complete analog of Theorem~\ref{thm:main} could be for
local systems of rank greater one. However, we formulate a conjectural analog of
Corollary~\ref{cor:indp} for higher rank in Section~\ref{sec:rmks}.

\smallskip

We now give a summary of the methods used  in the proof of Theorem~\ref{thm:main}.  For
simplicity, we assume that $X$ is smooth and  proper over $F$
 and that $S$ is irreducible with $\varphi^{-1}(S)\ne \varnothing$.
We first discuss 1), which is proved in Section~\ref{sec:global}.  Bogomolov  in \cite{Bog80} shows
 the existence of an element of  $\sigma \in G$ which acts as a homothety of infinite
 order on the pro-$p$-completion $\pi^{(p)}$ of $\pi$.  We first use this element to show
 that torsion points are  Zariski dense on $S$. To this aim, we show that $\varphi^{-1}(S)$ is
   `$p$-adically conical' around  every torsion point of $ {\rm Hom}_{\rm cont} (\pi, \QQp^\times)$, see Proposition~\ref{prop:globaltorsion}.

 Using again $\sigma$, we further show, using $p$-adic analysis, that
 $\varphi^{-1}(S)$ is linear around smooth torsion points.   By a classical argument, this implies
 that $S$ is
 a subtorus translated by a torsion point,  see Section~\ref{sec:p-global}.  In order to
 prove 2) in Section~\ref{sec:ab}, i.e.\ motivicity of this subtorus,  we use Faltings' theorem (Tate conjecture) on homomorphisms of
 abelian varieties over $F$ applied to the Albanese variety  of $X$.

 We give an alternative shorter proof of a  weaker version of
 Theorem~\ref{thm:main} in Section~\ref{sec.alter} in which $\Mg^\Pi_{\QQp}$ is replaced by 
 $\Mg^\Pi_{K}$, where $K$ is a number field.
 It is based on known cases of
 de Jong's conjecture and the Mordell--Lang conjecture for tori.

 \medskip

 One of the main applications of Simpsons' result or of Theorem~\ref{thm:main} is to
 cohomological jumping loci.
It is a classical question, initiated by  \cite{GL87} for cohomology of coherent sheaves,   how for a fixed bounded constructible complex
$\sF\in D^b_c(X(\CC),\CC )$ the dimensions
\[
h^i( \sF \otimes L) = \dim H^i(X(\CC) ,\sF\otimes L)
\]
vary with $L\in \Mg^\Pi_\CC(\CC)$.
It is not hard to see that for all $j\in \ZZ$ the jumping loci
\[
\Sigma^i(\sF,j) = \{L \in  \Mg^\Pi_\CC(\CC) \ |\ h^i(\sF\otimes L) >j \}
\]
are Zariski closed in $\Mg^\Pi_\CC(\CC)$, see Section~\ref{sec:jump}.

\smallskip

Recall that if  $K$ is a number field and $\sF \in D^b_c(X(\CC), K)$, then for all but  finitely many  places
$K\hookrightarrow \overline \QQ_p$  of $K$,
 $\sF$ induces an object $\sF_\et$ in $D^b_c(X_{\CC,\et}, \overline \QQ_p)$.

\begin{definition} \label{defn:arithm}
 For a prime number $p$ we say that
$\sF_\et\in D^b_c(X_{\CC, \et},\overline \QQ_p )$ is {\it arithmetic} if there is a finitely
generated field extension $F'\supset F$
such that for all $\sigma\in \mathrm{Aut}(\CC /F')$ the
complex $\sigma(\sF_\et)$ is isomorphic to $\sF_\et$ inside $D^b_c(X_{\CC, \et},\overline \QQ_p)$.

We say that $\sF\in D^b_c(X(\CC),\CC )$ is {\it arithmetic} if there exists a number field $K$
such that $\sF$ descends to an object of $ D^b_c(X(\CC),K)$ and such that for
 infinitely many embeddings  $K\hookrightarrow \overline \QQ_p$,
$\sF$ induces an  arithmetic object $\sF_\et$ in $D^b_c(X_{\CC,\et}, \overline  \QQ_p)$.

\end{definition}

\begin{remark}
Note that any perverse semi-simple object $\sF\in D^b_c(X(\CC),\CC )$ which is of
geometric origin in the sense of~\cite[6.2.4-6.2.5]{BBD82} is arithmetic,  in fact, $\sF_\et \in D^b_c(X_{\CC,\et}, \overline  \QQ_p)$
is then arithmetic for all but finitely many embeddings  $K\hookrightarrow \overline \QQ_p$.
  \end{remark}

As a direct application of Theorem~\ref{thm:main} we derive in Section~\ref{sec:jump} the following corollary.

\begin{theorem} \label{thm:jump} 
  Assume that the weights of $\Pi_\QQ$ are in $\{-2,-1\}$ and that $\sF\in D^b_c(X(\CC),\CC )$ is  arithmetic.
 For $i,j\in \ZZ$, each irreducible component of
$\Sigma^i(\sF,j) $  is a motivic subtorus $T\subset \Mg^\Pi_\CC$   translated by a torsion
character $\chi$. 
  \end{theorem}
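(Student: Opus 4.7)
The plan is to deduce Theorem~\ref{thm:jump} from Theorem~\ref{thm:main} via Artin's comparison between Betti and \'etale cohomology.

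By arithmeticity, choose a number field $K$ and an embedding $\iota\colon K\hookrightarrow \QQp$ so that $\sF$ descends to $\sF_K\in D^b_c(X(\CC),K)$ and the corresponding $\sF_\et\in D^b_c(X_{\CC,\et}, \QQp)$ is arithmetic, fixed up to isomorphism by some open subgroup $G'\subset G$. The jumping locus $\Sigma^i(\sF,j)$, defined by semi-continuity of cohomology of the universal twist, descends to a closed subscheme of $\Mg^\Pi_K$; let $S\subset \Mg^\Pi_{\QQp}$ be its base change along $\iota$.

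Next, check the hypothesis of Theorem~\ref{thm:main} for $S$. For $\chi\in \Hom_\cont(\pi, \QQp^\times)$ with corresponding rank-one \'etale local system $L_\et$ on $X_{\overline F}$, Artin's comparison (combined with invariance of \'etale cohomology under base change between algebraically closed fields of characteristic zero) yields
\[
H^i_\et(X_{\overline F}, \sF_\et\otimes L_\et) \simeq H^i(X(\CC), \sF\otimes L),
\]
where $L$ is the Betti realization of $L_\et$ via a lift $\QQp\hookrightarrow \CC$ of $\iota$. Hence $\varphi^{-1}(S) = \{\chi : \dim_{\QQp} H^i_\et(X_{\overline F}, \sF_\et\otimes L_\et) > j\}$. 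Since $G$ acts on $\pi$ compatibly with its action on $\sF_\et$ and $G'$ fixes $\sF_\et$ up to isomorphism, the set $\varphi^{-1}(S)$ is stabilized by $G'$.

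Applying Theorem~\ref{thm:main} then gives that every irreducible component of $S$ with non-empty $\varphi$-preimage is a motivic subtorus of $\Mg^\Pi_{\QQp}$ translated by a torsion character. Since both ``motivic subtorus'' (by Proposition~\ref{prop:mot}) and ``torsion-translated'' are $\QQ$-rational properties of a closed subscheme of $\Mg^\Pi$, the corresponding components of $\Sigma^i(\sF,j)\subset \Mg^\Pi_\CC$ admit the same description by base change from $K$ to $\QQp$ and to $\CC$ respectively. The principal obstacle is that Theorem~\ref{thm:main} only controls components of $S$ meeting the image of $\varphi$, whereas Theorem~\ref{thm:jump} asserts the conclusion for \emph{every} component. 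To bridge this, I would show that the Zariski closure of $\varphi(\varphi^{-1}(S))$ exhausts $S$: the Bogomolov-type torsion-density argument used in proving Theorem~\ref{thm:main} forces torsion --- hence $\varphi$-integral --- characters to be Zariski dense on each irreducible component of this closure, and combining this with the \'etale-cohomological description of $S$ above and Galois bookkeeping on the $K$-components of $\Sigma^i(\sF,j)$ should rule out any extra components.
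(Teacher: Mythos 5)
Your overall route is the paper's: descend $\Sigma^i(\sF,j)$ to $\Mg^\Pi_K$, pick an embedding $K\hookrightarrow\QQp$ at which $\sF_\et$ is arithmetic, identify $\varphi^{-1}(S)$ via the comparison between Betti and \'etale cohomology with the locus of continuous characters where $\dim H^i(X_{\CC,\et},\sF_\et\otimes L_\et)>j$, observe that this locus is stabilized by $\mathrm{Aut}(\CC/F')$ for a suitable finitely generated $F'$, apply Theorem~\ref{thm:main}, and descend the conclusion back to $\CC$. All of that is correct and is exactly what the paper does.

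However, the step you flag as ``the principal obstacle'' is a genuine gap, and your proposed bridge does not close it. Theorem~\ref{thm:main} says nothing about an irreducible component $S'$ with $\varphi^{-1}(S')=\varnothing$, i.e.\ a component containing no character with values in $\ZZp^\times$, and such components can exist for a badly chosen embedding (a component may lie entirely in the non-integral locus). Your suggestion to show that the closure of $\varphi(\varphi^{-1}(S))$ exhausts $S$ via the torsion-density argument is circular: Corollary~\ref{cor:globaltorsion} produces dense torsion points only on components already known to meet the image of $\varphi$, so it cannot detect, let alone rule out, components disjoint from that image. The paper's fix uses the freedom built into the definition of arithmeticity: $\sF_\et$ is arithmetic for infinitely many embeddings, while for all but finitely many embeddings $K\hookrightarrow\QQp$ the closure in $\Mg^\Pi_{\ZZp}$ of each irreducible component of $S=\Sigma^i(\sF,j)_{\QQp}$ has non-empty special fibre over $\overline\FF_p$; choosing an embedding satisfying both conditions, each component then contains a $\ZZp$-valued (hence unit-valued) character, so $\varphi^{-1}(S')\neq\varnothing$ for every component and Theorem~\ref{thm:main} applies to all of them. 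You need to add this choice of embedding (or an equivalent integrality argument) to make the proof complete; with it, your argument coincides with the paper's.
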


 For the same reason as in Theorem~\ref{thm:main},  the condition on the weights in Theorem~\ref{thm:jump} is sharp.
  Even for a constant sheaf $\sF$, Theorem~\ref{thm:jump} was not known in this generality.
For such $\sF$ it was known for $X$ smooth projective by Simpson \cite[Sections~6-7]{Sim93} and
for $X$ smooth by Budur--Wang  \cite[Thm.~1.3.1]{BW17}.
We refer to   Schnell~\cite[Thm.~2.2]{Schn15} for a detailed discussion of
the problem in terms of $D$-modules on
abelian varieties. All these works rely on complex analysis and the Riemann--Hilbert
correspondence.
 There is also an  approach to a
coherent version of the theorem~\cite[Prop.~1.2]{PR04} using mod $p$ reduction of the variety.

\medskip
\noindent
{\it Acknowledgements:}  We thank T.~Saito who asked us about the behaviour of our
Theorem~\ref{thm:main} under the change of complex embedding $F\hookrightarrow \CC$, see
Remark~\ref{rmk:tak} for an answer, and V.~Srinivas for a discussion on morphisms from a
smooth variety to an abelian variety which do not extend to a normal compactification.
We thank N.~Budur and B.~Wang for discussions about their results on jumping loci.
We thank M.~Groechenig  who suggested to us an alternative to our argument in
  Section~\ref{sec:p-global}  based on the Mordell--Lang conjecture for
  tori, see Remark~\ref{rmk.morlan}. We are grateful for  several helpful improvements
  which the referee suggested.

  \section{Some $p$-adic analysis} \label{sec:ana}

  In our local $p$-adic arguments below we need some $p$-adic analysis
  summarized in this section.
  In the sequel, $\QQp$ is an algebraic closure of $\QQ_p$ and $\overline\ZZ_p$
  is its ring of integers.  We endow $\QQp$ with the $p$-adic absolute value
  $\vert \cdot \rvert:\overline \QQ_p\to \RR$ normalized
  by $\lvert p \rvert = 1/p$ and $\QQp^d$ with the norm $\lvert (x_1,\ldots , x_d) \rvert= \sup_i \lvert x_i \rvert $.

  For an integer $d\ge 0$ and for  $\rho \in \lvert \QQp^\times \rvert$,  we define the {\it closed
    polydisc} $$P^d (\rho)=\{x\in \QQp^d, |x|\le \rho\}$$ around $0$ of radius $\rho$ and we let
  $$P^d(\rho; y)=P^d (\rho)+y$$ be the corresponding polydisc around $y\in \QQp^d$.
   We
  get  an isomorphism $$P^d(\rho;y)=\prod_{i=1}^d P^1(\rho;y_i) \ {\rm  with}  \ y=(y_1,\ldots, y_d).$$

\begin{definition} \label{defn:rigid}
A function $f:P^d(\rho;y)\to E$ is called
{\it globally analytic} (or {\it  rigid analytic})
  if it can be expressed as a convergent power
series with coefficients in a finite extension $E\subset \QQp$ of $\QQ_p$, i.e.\ in
coordinates as above there exist $a_J\in E$ for $J\in (\ZZ_{\ge 0})^d$
such that
$$f(x_1,\ldots, x_d) =\sum_{J=(j_1,\ldots, j_d)}  a_J (x-y)^J, \ \ (x-y)^J:=
(x_1-y_1)^{j_1}\cdots (x_d -y_d)^{j_d}, $$ converges for all $x\in P^d (\rho;y)$.
\end{definition}
The convergence
of the power series in Definition~\ref{defn:rigid} means that
$\lvert a_J\rvert\ \rho^{\lvert J\rvert} \to 0$ as $|J| \to \infty$,
so the $\QQp$-algebra of
globally analytic functions $A(P^d (\rho; y))$ on $P^d (\rho; y)$ is  isomorphic to
colimit of Tate algebras
\[\QQp \langle T_1,\ldots, T_d \rangle = \colim_{E} E\langle T_1,\ldots, T_d \rangle \]
where $E$ runs through all finite extensions $E$ of $\QQ_p$ inside $\QQp$ and  where we set
$T_i=(x_i-y_i)/z$ with $z\in \QQp$, $\lvert z \rvert =\rho$.
In the following proposition we summarize some properties of this Tate algebra over $\QQp$,
which are proved in  the same way as over a complete $p$-adic field $E$,
see~\cite[Ch.~3]{FvdP04}.

\begin{proposition}\label{prop:tatealg} We have:
  \begin{itemize}
  \item[1)] $\QQp \langle T_1,\ldots, T_d \rangle$ is a noetherian ring of dimension $d$;
    \item[ 2)] all maximal ideals of $\QQp \langle T_1,\ldots, T_d \rangle$ have residue
      field $\QQp$;
  \item[ 3)]    $\QQp \langle T_1,\ldots, T_d \rangle$ is Jacobson, i.e. every prime ideal of $\QQp \langle T_1,\ldots, T_d \rangle$ is an
    intersection of maximal ideals;
  \item[4)]
    the canonical map $\QQp[T_1,\ldots , T_d]\to  \QQp \langle T_1,\ldots, T_d \rangle$ is flat.
  \end{itemize}

\end{proposition}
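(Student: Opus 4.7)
The plan is to mimic the standard proofs for the Tate algebra $E\langle T_1,\ldots,T_d\rangle$ over a complete extension $E/\QQ_p$, as in~\cite[Ch.~3]{FvdP04}, circumventing the non-completeness of $\QQp$ by exploiting the colimit presentation
\[
\QQp\langle T_1,\ldots,T_d\rangle \;=\; \colim_E\,E\langle T_1,\ldots,T_d\rangle.
\]
The essential technical input is Weierstrass preparation and division. Since any element $f\in\QQp\langle T_1,\ldots,T_d\rangle$ has coefficients in some finite extension $E/\QQ_p$ inside $\QQp$, after a $\QQp$-linear change of coordinates making $f$ distinguished in $T_d$, the classical Weierstrass preparation theorem over the complete field $E$ produces a factorization $f=u\cdot g$ with $u\in E\langle T_1,\ldots,T_d\rangle^\times$ and $g\in E\langle T_1,\ldots,T_{d-1}\rangle[T_d]$ a Weierstrass polynomial of degree $s$; this factorization is inherited by $\QQp\langle T_1,\ldots,T_d\rangle$. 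By the same reduction, Weierstrass division realizes $\QQp\langle T_1,\ldots,T_d\rangle/(g)$ as a free module of rank $s$ over $\QQp\langle T_1,\ldots,T_{d-1}\rangle$.

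For (1), I would prove Noetherianity by induction on $d$, the case $d=0$ being trivial. Given a nonzero ideal $I$, pick any $0\neq f\in I$ and Weierstrass-prepare it so that $(f)=(g)$ up to a unit. Then $\QQp\langle T_1,\ldots,T_d\rangle/(g)$ is finite over $\QQp\langle T_1,\ldots,T_{d-1}\rangle$, which is Noetherian by induction, so the image of $I$ there is finitely generated, and hence so is $I$. The chain $(0)\subsetneq(T_1)\subsetneq\cdots\subsetneq(T_1,\ldots,T_d)$ shows $\dim\ge d$, while the upper bound follows from the flatness statement~(4) and the dimension formula for flat maps to the polynomial ring. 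For (2), I would again induct on $d$: given a maximal ideal $\fm$ with $d\ge 1$, Weierstrass preparation yields $g\in\fm$ making $R/\fm$ a finite algebra over $A/(\fm\cap A)$ where $A:=\QQp\langle T_1,\ldots,T_{d-1}\rangle$. The standard fact that a Noetherian integral domain admitting a finite field extension is itself a field forces $\fm\cap A$ to be maximal in $A$; the induction hypothesis gives $\kappa(\fm\cap A)=\QQp$, so $\kappa(\fm)$ is a finite extension of the algebraically closed field $\QQp$ and therefore equals $\QQp$.

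For (3), I would reduce to showing that the Jacobson radical of each integral quotient $\QQp\langle T_1,\ldots,T_d\rangle/\mathfrak{p}$ is zero. Noether normalization via iterated Weierstrass preparation expresses this quotient as a finite integral extension of some Tate algebra $\QQp\langle T_1,\ldots,T_r\rangle/\mathfrak{q}$; since finite integral extensions preserve the Jacobson property, one reduces to checking that any nonzero $f\in\QQp\langle T_1,\ldots,T_r\rangle$ lies outside some maximal ideal. Choosing a finite extension $E/\QQ_p$ with $f\in E\langle T_1,\ldots,T_r\rangle$ and invoking the classical Nullstellensatz for affinoid algebras over the complete field $E$, one finds a maximal ideal of $E\langle T_1,\ldots,T_r\rangle$ with residue field a finite extension $E'/E$ at which $f$ is nonzero; any embedding $E'\hookrightarrow\QQp$ then produces the desired maximal ideal of $\QQp\langle T_1,\ldots,T_r\rangle$ by~(2). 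For (4), flatness is preserved under filtered colimits in both source and target, so the statement reduces to the classical flatness of $E[T_1,\ldots,T_d]\to E\langle T_1,\ldots,T_d\rangle$ for each finite $E/\QQ_p$, itself a standard consequence of Weierstrass division.

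The main subtlety is that $\QQp$ is not complete, whereas the classical proofs of Weierstrass preparation are typically phrased as a convergent fixed-point construction in a Banach space. The plan sidesteps this by always performing the analytic construction inside a finite sub-extension $E\subset\QQp$, which \emph{is} complete, and then extending scalars to $\QQp$ through the colimit presentation, which is flat. Once Weierstrass preparation and division are in hand over $\QQp$, the remainder of the proof is entirely algebraic and proceeds as in the classical complete setting.
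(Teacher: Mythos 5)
Your route is essentially the paper's: the authors also transfer Weierstrass preparation/division to $\QQp\langle T_1,\ldots,T_d\rangle$ by noting that any given element has coefficients in a finite (hence complete) subextension $E\subset \QQp$, prove Noetherianity by exactly your induction on $d$, and treat 2)--4) as immediate reductions to the classical complete case; your sketches of 2)--4) fill in that reduction in the intended way (for 4), note that the colimit argument is legitimate here because $E'\langle T\rangle\cong E\langle T\rangle\otimes_{E[T]}E'[T]$, i.e.\ $\QQp\langle T\rangle=E\langle T\rangle\otimes_E\QQp$ is a base change of the classical flat map).

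The one step I would not accept as written is the dimension upper bound in 1). The dimension inequality for the flat map $\QQp[T_1,\ldots,T_d]\to\QQp\langle T_1,\ldots,T_d\rangle$ bounds the dimension by $d$ plus the supremum of the fiber dimensions, and the fibers over non-closed points of $\Spec \QQp[T_1,\ldots,T_d]$ are in general positive-dimensional: already for $d=2$ the prime $(T_2-\exp(pT_1))$ of the Tate algebra contracts to $(0)$ in the polynomial ring, because $\exp(pT_1)$ is transcendental over $\QQp(T_1)$; so the generic fiber has dimension $\ge 1$ and the naive flatness bound does not give $\dim\le d$. The gap is easy to close: either use your part 2) together with Weierstrass division at a point of the polydisc to see that every maximal ideal is of the form $(T_1-x_1,\ldots,T_d-x_d)$ with $\lvert x_i\rvert\le 1$, and then conclude height $\le d$ by Krull's height theorem (equivalently, apply the local flat dimension formula at such a maximal ideal, where the fiber is $\QQp$); or, simplest, observe that $\QQp\langle T_1,\ldots,T_d\rangle$ is a filtered union of the subrings $E'\langle T_1,\ldots,T_d\rangle$, each finite over a fixed $E\langle T_1,\ldots,T_d\rangle$, hence it is integral over that classical Tate algebra and has the same dimension $d$. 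With that repair, the rest of your argument (Weierstrass transfer, the induction for Noetherianity, the residue field and Jacobson reductions) is correct and matches the paper.
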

\begin{proof}
 Only the Noether property 1) is not immediately deduced from the corresponding  classical results
 about Tate algebras over complete fields, see \cite[Ch.~3]{FvdP04}.
 The standard proof  that a non-zero ideal $I\subset \QQp \langle T_1,\ldots,
 T_d \rangle$ is finitely generated  goes through.
  Weierstrass preparation produces after
applying an automorphism a $T_d$-distinguished  element $f\in I$. Then $I$ is generated by
$I\cap \QQp\langle T_1 , \dots , T_{d-1} \rangle [T_d] $ and we can argue by induction on $d$.
     \end{proof}

 \begin{definition}  \label{defn:cl_ana} A subset $S\subset P^d (\rho;y)$ is  {\em closed analytic} if it is the
zero locus of finitely many globally analytic functions on $ P^d(\rho;y)$.
\end{definition}

 A {closed analytic} $S\subset P^d (\rho;y)$ corresponds to an ideal $I\subset A(P^d
 (\rho; y))$ which equals its own nilradical.
We denote by $$A(S)= A(P^d (\rho; y))/I$$ the {\it ring of globally analytic functions on}  $S$ which by definition are
restrictions of globally analytic function on $P^d(\rho; y)$. An {\em irreducible
  component} of $S$ is by definition the zero locus of a minimal prime ideal of $A(S)$, so
$S$ is the finite union of its irreducible components by Proposition~\ref{prop:tatealg} 1).

A map $f:P^{d'} (\rho';z)\to S$ is called {\it  globally analytic}  if the $d$ composed maps
$$ P^{d'} (\rho';z)\xrightarrow{f}    S\hookrightarrow  P^d(\rho;y) \to P^1(\rho;y_i) $$
 are.
For a point $x\in S$ we define the {\it stalk of analytic functions}  $A_{S,x}$ at $x$
in the obvious way. It is a noetherian local ring,  faithfully flat over the
localization $A(S)_x$ of $A(S)$ at $x$  as the completion at $x$ is. The {\em tangent space} $T_{x} S$ of $S$ at $x$ is defined as
$\Hom_{\QQp}(\mathfrak m_{S,x}/ \mathfrak m_{S,x}^2,\QQp)$, where $\mathfrak m_{S,x}$ is the
maximal ideal of the local ring $A_{S,x}$.
A point $x\in S$ is called {\it smooth}  if the ring $A_{S,x}$ (or equivalently the ring
$A(S)_x$) is regular, see~\cite[Prop.~4.6.1]{FvdP04}.

From Proposition~\ref{prop:tatealg} 1) we immediately deduce the classical:
\begin{lemma}[Strassmann] \label{lem:str}  If $d=1$ the closed analytic subsets are finite or the whole~$P^1 (\rho;y)$.
\end{lemma}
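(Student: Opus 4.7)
The strategy is to translate the geometric statement into a statement about the one-variable Tate algebra $A := A(P^1(\rho; y)) \cong \QQp\langle T \rangle$ and then apply the ring-theoretic properties collected in Proposition~\ref{prop:tatealg}. By definition, a closed analytic subset $S \subset P^1(\rho; y)$ is the zero locus of a radical ideal $I \subset A$. If $I = 0$, then $S = P^1(\rho; y)$ and we are done, so from now on I assume $I \ne 0$ and aim to show that $S$ is finite.

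First I would pick a nonzero element $f \in I$. The Tate algebra $A$ in one variable is a one-dimensional noetherian integral domain: noetherianity and $\dim A = 1$ are Proposition~\ref{prop:tatealg} 1), while the domain property follows from the fact that the Gauss norm $|\sum a_j T^j| = \max_j |a_j|\rho^j$ on $A$ is multiplicative (this also drops out of Weierstrass preparation, to which the proof of Proposition~\ref{prop:tatealg} already appeals). Consequently $A/(f)$ is a noetherian ring of Krull dimension zero, hence Artinian. The quotient $A/I$, being a further quotient, is then Artinian as well, and thus has only finitely many maximal ideals.

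Finally I would identify these maximal ideals with the points of $S$. By Proposition~\ref{prop:tatealg} 2), every maximal ideal of $A$ has residue field $\QQp$ and is of the form $\ker(\mathrm{ev}_x : A \to \QQp,\; f \mapsto f(x))$ for a unique $x \in P^1(\rho; y)$. Under this correspondence, a point $x$ lies in $S = V(I)$ precisely when $I$ is contained in the associated maximal ideal. Hence the points of $S$ are in bijection with the maximal ideals of the Artinian ring $A/I$, a finite set, and Jacobsonness (Proposition~\ref{prop:tatealg} 3)) ensures that there are no further prime ideals to worry about when recovering $S$ from $I$. This proves that $S$ is finite, completing the dichotomy.

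The only non-routine input is the fact that $A$ is a one-dimensional integral domain; everything else is a direct unpacking of Proposition~\ref{prop:tatealg}. If one wished to avoid even invoking the Gauss-norm multiplicativity, the alternative classical route via Weierstrass preparation writes any nonzero $f \in A$ as $u \cdot P$ with $u \in A^\times$ and $P \in \QQp[T]$ a polynomial of some finite degree $N$, yielding the sharper bound that $V(f)$, and hence $S$, contains at most $N$ points.
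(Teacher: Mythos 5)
Your proof is correct and follows essentially the same route as the paper, which simply deduces the lemma from Proposition~\ref{prop:tatealg}~1); you fill in the routine commutative-algebra details (Artinian quotient, maximal ideals with residue field $\QQp$ corresponding to points of the disc). The only input beyond the stated proposition is that $\QQp\langle T\rangle$ is a domain, which is classical (e.g.\ it is a filtered colimit of the domains $E\langle T\rangle$) and entirely consistent with the Weierstrass-preparation argument the paper itself uses to prove Proposition~\ref{prop:tatealg}.
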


In the following we assume that we are given a direct sum decomposition $\QQp^d=\QQp^{d_1}\oplus \QQp^{d_2}$
of $\QQp$-vector spaces. We  write $P(\rho)$   for
$P^d (\rho)$, $P_1(\rho)$ for $P^{d_1}(\rho)$ and $P_2(\rho)$ for $P^{d_2}(\rho)$. The linear projections are
denoted
\ga{pq}{  r:P(\rho) \to P_1(\rho), \  q:P(\rho) \to P_2(\rho).}
We consider the action
 \ga{sigma}{\overline \ZZ_p\times P(\rho)\to P (\rho), \  (\alpha , (x_1,x_2 ))\mapsto  \alpha \cdot (x_1,x_2)= (\alpha x_1 , \alpha^2 x_2).}

\begin{definition} \label{defn:linear} Let $\rho\in \lvert  \QQp^\times \rvert$.
\begin{itemize}
\item[1)] A  subset $S\subset P(\rho)$ is said to be {\it conic} if  $\overline \ZZ_p \cdot S=S$.
 \item[2)]
A subset $S\subset P(\rho)$ is said to be {\it  linear  }  if
there exist   $\QQp$-linear subspaces $V_1\subset
\QQp^{d_1}$ and $V_2\subset \QQp^{d_2}$ such  $S =(V_1\oplus V_2)\cap P(\rho)$. In particular linear
implies conic and closed analytic.
\end{itemize}
\end{definition}

Fix $\alpha\in \overline \ZZ_p^\times$ which is not a root of unity and consider the automorphism
$$\sigma:P(\rho)\to P(\rho), \ \sigma(x_1,x_2)=(\alpha x_1,\alpha^2 x_2). $$

\begin{lemma} \label{lem:conic}

Let $S\subset P(\rho)$ be a closed analytic subset with $\sigma(S)\subset S$. Then $S$ is conic.

\end{lemma}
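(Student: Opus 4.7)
The plan is to show that for any fixed point $s = (x_1, x_2) \in S$, the entire orbit $\overline{\ZZ}_p \cdot s$ lies in $S$, which is precisely what conicity means.

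First I would introduce, for a given $s = (x_1,x_2) \in S$, the one-parameter analytic map
\[
f : \overline{\ZZ}_p \longrightarrow P(\rho), \qquad \beta \longmapsto (\beta x_1, \beta^2 x_2).
\]
This is well-defined because $|\beta| \le 1$ forces $|\beta x_1| \le |x_1| \le \rho$ and $|\beta^2 x_2| \le |x_2| \le \rho$, and it is globally analytic since each coordinate is a polynomial in $\beta$ with coefficients in $\QQp$. The key observation is that the orbit of $s$ under $\sigma$ is recovered as the image of the powers of $\alpha$, namely $\sigma^n(s) = f(\alpha^n)$ for every $n \ge 0$.

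Next I would use the hypothesis $\sigma(S) \subset S$ to deduce that $f(\alpha^n) \in S$ for every $n \ge 0$. Let $I \subset A(P(\rho))$ be the ideal defining $S$, and pick any $g \in I$. Then $g \circ f$ is a globally analytic function on $P^1(1) = \overline{\ZZ}_p$ whose zero set contains the infinite set $\{\alpha^n : n \ge 0\}$ — this set is infinite because $\alpha$ is not a root of unity, so the powers $\alpha^n$ are pairwise distinct. By Strassmann's Lemma~\ref{lem:str}, the zero locus of $g \circ f$ is either finite or all of $\overline{\ZZ}_p$; being infinite, it must be all of $\overline{\ZZ}_p$. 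Hence $g \circ f \equiv 0$.

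Since this holds for every $g \in I$, the whole image $f(\overline{\ZZ}_p) = \{(\beta x_1, \beta^2 x_2) : \beta \in \overline{\ZZ}_p\}$ lies in the zero locus of $I$, i.e.\ in $S$. As $s \in S$ was arbitrary, this proves $\overline{\ZZ}_p \cdot S \subset S$, and therefore $S$ is conic.

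There is essentially no obstacle here: the only step that requires any care is verifying that the composition $g \circ f$ lands in the Tate algebra on $P^1(1)$, which is immediate from the fact that $f$ is given coordinatewise by polynomials of degree $\le 2$ and that $|\beta| \le 1$. The argument is really just Strassmann applied to the restriction of defining equations along the analytic curve traced out by the $\overline{\ZZ}_p$-action through $s$.
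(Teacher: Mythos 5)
Your proof is correct and is essentially the same argument as the paper's: restrict each defining function of $S$ along the analytic curve $\beta\mapsto\beta\cdot s=(\beta x_1,\beta^2 x_2)$, note it vanishes on the infinite set $\{\alpha^n\}$ because $\sigma^n(s)\in S$, and conclude by Strassmann's Lemma~\ref{lem:str} that it vanishes identically, so $\overline{\ZZ}_p\cdot s\subset S$. Only the notational roles of $f$ and $g$ are swapped relative to the paper.
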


 \begin{proof}  Consider a  globally analytic function $f:P(\rho)\to \QQp$ vanishing on $S$.
   For $x\in S$ we consider the globally analytic function $g:\overline \ZZ_p\to \QQp$,
   $g(\beta)=f(\beta\cdot   x)$.
    By assumption, the set $\{ \alpha^n\ | \ n\ge 0\}$ is infinite and $g$ vanishes on it.
 Lemma~\ref{lem:str}
implies that $g$ vanishes identically.
Thus $f$ vanishes on $\ZZp \cdot x$. As $S$ is the zero locus of all such $f$ by
assumption, we get $ \ZZp\cdot x\subset S$.
\end{proof}

\begin{lemma}\label{lem:locsame}
If $S,S'\subset P(\rho)$ are conic closed analytic subsets such that there exists $\rho'\in
(0,\rho)$ with $S\cap P(\rho')=S'\cap P(\rho')$ then $S=S'$.
\end{lemma}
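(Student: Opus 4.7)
My plan is to prove the inclusion $S\subset S'$ by showing that any globally analytic function vanishing on $S'$ also vanishes at every point of $S$; the reverse inclusion then follows by symmetry. Concretely, I would fix a point $x=(x_1,x_2)\in S$ and an arbitrary $f\in A(P(\rho))$ vanishing on $S'$, with the goal of establishing $f(x)=0$.

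The key auxiliary object is the one-variable function $g:P^1(1)\to \QQp$ given by $g(\beta)=f(\beta\cdot x)=f(\beta x_1,\beta^2 x_2)$. Since $|\beta|\le 1$ guarantees $\beta\cdot x\in P(\rho)$, composing $f$ with the polynomial map $\beta\mapsto \beta\cdot x$ produces a globally analytic function $g$ on $P^1(1)=\ZZp$. Next I would choose $\epsilon\in \lvert \QQp^\times\rvert$ small enough that $\epsilon\rho\le \rho'$; this is possible since $\lvert \QQp^\times\rvert$ is dense in $\RR_{>0}$. For any $\beta\in P^1(\epsilon)$ one then has $|\beta x_1|\le \epsilon\rho\le \rho'$ and $|\beta^2 x_2|\le \epsilon^2\rho\le \rho'$, so $\beta\cdot x\in P(\rho')$. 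By conicity of $S$, also $\beta\cdot x\in S$, whence $\beta\cdot x\in S\cap P(\rho')=S'\cap P(\rho')$. Thus $g$ vanishes on the infinite subdisc $P^1(\epsilon)\subset P^1(1)$.

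Applying Strassmann's Lemma~\ref{lem:str} to the zero locus of $g$ on $P^1(1)$ now forces $g\equiv 0$, since that zero locus is an infinite closed analytic subset of $P^1(1)$. In particular $g(1)=f(x)=0$. As $f$ was an arbitrary element of the ideal cutting out $S'$ inside $A(P(\rho))$, this yields $x\in S'$, hence $S\subset S'$; exchanging the roles of $S$ and $S'$ completes the argument.

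I do not foresee any serious obstacle: the proof is a direct interplay of conicity of $S$ (which drags $x$ into the region $P(\rho')$ where $S$ and $S'$ agree, along a one-parameter analytic family $\beta\mapsto \beta\cdot x$) with Strassmann's lemma (which propagates the vanishing from a small subdisc back to $\beta=1$). The only mildly delicate point is that the $\ZZp$-action on $P(\rho)$ involves a quadratic term in the second coordinate, but because $\epsilon\le 1$ gives $\epsilon^2\le \epsilon$, the size estimate on the $P_2$-factor goes through with the same choice of $\epsilon$.
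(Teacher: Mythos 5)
Your argument is correct and is essentially the paper's own proof: fix $x\in S$ and a function $f$ vanishing on $S'$, use conicity to see that $g(\beta)=f(\beta\cdot x)$ vanishes for all sufficiently small $\beta$ (the paper phrases this as $|\beta|<\rho'/\rho$, matching your choice $\epsilon\rho\le\rho'$), and conclude $g\equiv 0$ by Strassmann, so $f(x)=g(1)=0$. Your explicit handling of the quadratic weight and the appeal to symmetry for the reverse inclusion are the same steps the paper leaves implicit.
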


\begin{proof}
Let $f:P(\rho)\to \QQp$ be globally analytic and assume that $f$ vanishes on $S'$. We want to
show that $f$ vanishes on $S$. For $x\in S$  we consider the globally analytic function   $g:\ZZp\to \QQp$,
   $g(\beta)=f(\beta\cdot   x)$. Then for $\lvert \beta \rvert < \rho'/\rho$ we have
   $g(\beta)=0$. So $g$ has infinitely many zeros and must vanish identically by
   Lemma~\ref{lem:str}. In particular $0=g(1)=f(x)$.
\end{proof}

 \begin{proposition} \label{prop:lin}
 Consider a  closed analytic subset  $S\subset P(\rho)$ such that $\sigma(S)\subset S$ and such
 that $S$ is smooth at the point
 $0$. Assume that there exists a closed analytic subset $S_2\subset P_2(\rho)$  such that $q(S)\subset S_2$ and  such that $d_0q:T_0(S)\to
 T_0(S_2)$ is surjective.  Then $S$  is linear.
 \end{proposition}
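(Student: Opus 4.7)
The plan is to exploit the two distinct eigenvalues $\alpha$ and $\alpha^2$ appearing in the $d_0\sigma$-action on $T_0 P(\rho)$ to pin down the local structure of $S$ at $0$, and then use Lemma~\ref{lem:locsame} to globalize.

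As a first step, Lemma~\ref{lem:conic} implies that $S$ is conic; since $\alpha^{-1}\in\overline{\ZZ}_p^\times\subset\overline{\ZZ}_p$, applying conicness with parameter $\alpha^{-1}$ gives $\sigma^{-1}(S)\subset S$, hence in fact $\sigma(S)=S$. The differential $d_0\sigma$ acts on $T_0 P(\rho)=\QQp^{d_1}\oplus\QQp^{d_2}$ by multiplication by $\alpha$ on the first summand and by $\alpha^2$ on the second. As $\alpha$ is not a root of unity, these eigenvalues are distinct, so the $\sigma$-invariant subspace $T_0 S$ splits into eigenspaces as
\[
T_0 S = V_1\oplus V_2, \qquad V_i := T_0 S \cap \QQp^{d_i}.
\]
The surjectivity of $d_0 q\colon T_0 S \to T_0 S_2$ then gives $T_0 S_2 = V_2$.

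Choose linear coordinates $(y_{(1)},y_{(2)})$ on $\QQp^{d_1}$ and $(z_{(1)},z_{(2)})$ on $\QQp^{d_2}$ such that $V_1=\{y_{(2)}=0\}$ and $V_2=\{z_{(2)}=0\}$, and set $a=\dim V_1$, $b=\dim V_2$. By smoothness of $S$ at $0$ together with the rigid analytic implicit function theorem, on a small enough polydisc around $0$ the set $S$ is the graph of an analytic map
\[
(y_{(1)},z_{(1)}) \longmapsto \bigl(y_{(1)},\phi_1(y_{(1)},z_{(1)}),\,z_{(1)},\phi_2(y_{(1)},z_{(1)})\bigr),
\]
with $\phi_i(0)=0$ and $d_0\phi_i=0$. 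The equality $\sigma(S)=S$ translates into the functional equations
\[
\phi_k(\alpha y_{(1)},\alpha^2 z_{(1)}) = \alpha^k \,\phi_k(y_{(1)},z_{(1)}), \qquad k=1,2.
\]
Comparing Taylor coefficients and using that $\alpha$ is not a root of unity, a monomial $y_{(1)}^I z_{(1)}^J$ appearing in $\phi_k$ must satisfy $|I|+2|J|=k$. Combined with the vanishing of the linear terms, this forces $\phi_1\equiv 0$ and $\phi_2(y_{(1)},z_{(1)}) = Q(y_{(1)})$ for some vector-valued homogeneous quadratic form $Q\colon \QQp^{a}\to \QQp^{d_2-b}$.

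Finally, we use $q(S)\subset S_2$ to kill $Q$. Since $T_0 S_2=V_2$, we may choose analytic functions $g_1,\ldots,g_{d_2-b}$ vanishing on $S_2$ of the form $g_i = z_{(2),i} + h_i$ with each $h_i$ of order $\geq 2$ at $0$. Substituting the parametrisation of $q(S)$ yields the identity $Q_i(y_{(1)}) + h_i(z_{(1)},Q(y_{(1)})) = 0$ on a small polydisc; setting $z_{(1)}=0$, the summand $h_i(0,Q(y_{(1)}))$ has order $\geq 4$ in $y_{(1)}$, so matching the degree-$2$ part of the identity forces $Q_i\equiv 0$ for each $i$. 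Hence $S$ agrees with $(V_1\oplus V_2)\cap P(\rho')$ on a small polydisc $P(\rho')$; since both sets are conic and closed analytic in $P(\rho)$, Lemma~\ref{lem:locsame} upgrades this local equality to $S=(V_1\oplus V_2)\cap P(\rho)$, which is linear by Definition~\ref{defn:linear}. The main subtlety lies in the residual quadratic form $Q$: the $\sigma$-weight equation $|I|+2|J|=2$ admits the non-trivial solution $(|I|,|J|)=(2,0)$, so equivariance alone is not enough, and this is exactly where the projection hypothesis $q(S)\subset S_2$ becomes essential.
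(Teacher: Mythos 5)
Your proof is correct, but it reaches the conclusion by a genuinely different route than the paper. The paper never writes $S$ as a graph: it proves the set-theoretic containment $S\subset T_0(S)\cap P(\rho)$ pointwise, by observing that for each $x\in S$ the weighted orbit $\beta\mapsto\beta\cdot x$ lies in $S$ (Lemma~\ref{lem:conic}) and has tangent direction $(r(x),0)$ at $\beta=0$, whence $r(x)\in V_1$; and that $\beta^2 q(x)\in S_2$ for all $\beta$ upgrades, via Strassmann (Lemma~\ref{lem:str}), to the unweighted line $\beta q(x)\subset S_2$, whose tangent direction gives $q(x)\in T_0(S_2)=V_2$. Smoothness at $0$ then enters only through a dimension count: the inclusion of $S$ into the linear set $T_0(S)\cap P(\rho)$ must be an isomorphism on stalks at $0$, giving local equality, and Lemma~\ref{lem:locsame} globalizes exactly as in your last step. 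Your argument instead localizes immediately: the rigid-analytic implicit function theorem at the smooth point writes $S$ as a graph, the $\sigma$-functional equation on Taylor coefficients (weights $1$ and $2$, $\alpha$ not a root of unity) reduces the graph to $\phi_1\equiv 0$ and a residual quadratic form $Q$ in the weight-$2$ directions, and the hypothesis $q(S)\subset S_2$ together with $T_0S_2=V_2$ kills $Q$ by comparing degree-$2$ terms against defining functions $z_{(2),i}+h_i$ of $S_2$. What the paper's route buys is economy of tools: it stays entirely within global analytic functions, orbit curves and Strassmann, avoiding the local structure theorem for smooth rigid points; what yours buys is an explicit identification of where each hypothesis acts, in particular that $\sigma$-equivariance alone leaves exactly the weight-$2$ quadratic ambiguity that the projection hypothesis removes, which is a genuinely illuminating remark. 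Two small points you should make explicit: the implicit function theorem and the choice of the germs $g_i=z_{(2),i}+h_i$ must be carried out over a complete subfield, i.e.\ a finite extension $E\subset\QQp$ of $\QQ_p$ containing the finitely many relevant coefficients, since $\QQp$ itself is not complete (this is the same colimit convention the paper uses for Tate algebras); and after the block-diagonal linear change of coordinates adapted to $V_1\subset\QQp^{d_1}$, $V_2\subset\QQp^{d_2}$ one should shrink the radius so that the local equality is stated on a genuine polydisc in the original coordinates before invoking Lemma~\ref{lem:locsame} (harmless, since the change of coordinates respects the $\sigma$-action and conicity).
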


\begin{proof}
 In order to simplify the notation we canonically  identify the tangent space $T_0(P(\rho))$ with $\QQp^d$.
   From Lemma~\ref{lem:conic} we deduce that $S$ is conic.
By the eigenspace decomposition of $d_0\sigma:T_0S \to T_0S $ we see that
$T_0(S)=V_1\oplus V_2$ with $V_1= \QQp^{d_1}\cap T_0(S)$ and  $V_2= \QQp^{d_2}\cap T_0(S)$.

It is sufficient to show that
\eq{eq.compconic}{
 S  \subset T_0(S) \cap P(\rho).
}
Indeed, by dimension reasons this inclusion has to induce an isomorphism on the stalks of
analytic functions around $0$. From  this we conclude that there exists $\rho'\in (0,\rho]$ such that $S\cap
P(\rho')=T_0(S)\cap P(\rho')$, so the proposition follows from Lemma~\ref{lem:locsame}.

In order to show~\eqref{eq.compconic} it is sufficient to show $r(x)\in V_1$ and
$q(x)\in V_2$ for any $x\in S$.  Fixing $x\in S$, by Lemma~\ref{lem:conic} we have an analytic map  $g_x:\ZZp\to
S$, $\beta\mapsto \beta \cdot x$ with $\im (d_0g_x)= \QQp\cdot (r(x) \oplus 0)$. So we obtain $\QQp\cdot
(r(x) \oplus 0)\subset T_0(S)=V_1\oplus V_2$, thus $r(x)\in V_1$.

By definition and Lemma~\ref{lem:conic}, we have
$q(\ZZp\cdot x)\subset  q(S)\subset S_2$. Using Lemma~\ref{lem:str} one deduces that
$\beta q(x)\in S_2$ for all $\beta\in \ZZp$.  So we get an analytic map $h_{q(x)}:\ZZp\to S_2 $,  $\beta\mapsto \beta
q(x)$ with $\im(d_0 h_{q(x)})=\QQp q(x)$. Here the action of $\ZZp$ on $V_2$ is just the
(unweighted) linear action.
So  we obtain $\QQp q(x)\subset T_0(S_2)$. By the surjectivity assumption, $T_0(S_2)=V_2$.  Thus $q(x)\in V_2$. This finishes the proof.  \end{proof}

  \begin{remark} \label{rmk:weight}
The results of this section carry over to the case where the power $2$ in the definition
of a conic set etc.~is replaced by any positive natural number.
\end{remark}

\section{The $p$-adic exponential map}\label{sec:exp}

In this section we recall some properties of the $p$-adic exponential and
logarithm functions in our setting, see~\cite[Ch.~12]{Cas86}
 for proofs.

For a topologically finitely generated pro-finite abelian group
 $\pi$ and a prime number $p$ we denote by $\pi^{(p)}$ its pro-$p$ completion and
 by $\pi^{(p)}/{\rm tor}$ the quotient modulo the torsion subgroup.  Let
 $\pi^{(p)}_{\QQ_p}$ be the finite dimensional $\QQ_p$-vector space $\pi^{(p)}\otimes_{\ZZ_p} \QQ_p$.

The set of {\it continuous
characters} $$H^\pi_p=\Hom_\cont (\pi^{(p)}/{\rm tor} , \QQp^\times),$$
 is after the choice of  coordinates
 $\pi^{(p)}/{\rm tor}=\oplus_1^d \ZZ_p\cdot \gamma_i$,
 isomorphic to the
open polydisc
$$\{ x\in \QQp^d \ | \ \lvert x\rvert <1 \},$$
by sending $\chi\in H^\pi_p$ to $(\chi( \gamma_i)-1)_{i=1}^d$ in  $\QQp^d$.

For $\rho\in \lvert \QQp^\times\rvert$  in the open interval $(0,1),$ we denote by $H^\pi_p( \rho )$ the subgroup of characters $\chi
\in H^\pi_p$ with $\lvert \chi(\gamma)-1 \rvert\le \rho$ for all $\gamma\in \pi$.  Clearly,
in coordinates $H^\pi_p(\rho)$ is identified with the closed polydisc $P^d(\rho;1)$.

Similarly, we consider
$$T^\pi_p=  \Hom_\cont (\pi , \QQp )  $$
and the subset $T^\pi_p(\rho)  \subset  T_p^\pi$ consisting of those $\chi\in T^\pi_\rho$ with $\lvert \chi(\gamma
) \rvert \le \rho$ for all $\gamma \in \pi$. In coordinates $T^\pi_p(\rho)$ is identified with the closed polydisc $P^d(\rho)$
 by sending $\chi\in T^\pi_p (\rho)$ to the image $(\chi( \gamma_i))_{i=1}^d$ in  $\QQp^d$.
There is a canonical isomorphism between $T^\pi_p$ and the {\it analytic tangent space} $T_1 H^\pi_p(\rho)$ at the
character $1$ defined above.

Set $\rho_\circ= p^{-\frac{1}{p-1}} $. For $\rho\in (0,\rho_\circ )\cap \lvert  \QQp^\times\rvert$
the $p$-{\em adic exponential homomorphism} is  a globally analytic
isomorphism
\[
\exp: T^\pi_p(\rho) \xrightarrow{\cong} H^\pi_p(\rho)
\]
This exponential
map is explicitly given by
\[
  \chi \mapsto \sum_{j=0}^\infty \frac{\chi^j}{j!}
\]
and its inverse
\[
\log: H^\pi_p(\rho)   \xrightarrow{\cong} T^\pi_p(\rho)
\]
it the usual globally analytic $p$-{\it adic logarithm}.

\smallskip

\begin{data} \label{data:data0} For the rest of this section we assume given the following data:
\begin{itemize}
  \item
    a closed subgroup $ \pi_\natural \subset \pi$;
  \item a continuous automorphism $\sigma: \pi  \xrightarrow{\simeq}    \pi$
which  induces a $\QQ_p$-linear  semi-simple  map  on     $\pi^{(p)}_{\QQ_p} $
stabilizing $ \pi_{\natural,\QQ_p}^{(p)}$,
   which is multiplication by $\alpha^{-2}$ on  $
   \pi_{\natural,\QQ_p}^{(p)}$ and by $\alpha^{-1}$ on  $ (\pi/\pi_\natural)^{(p)}_{\QQ_p}$,   where  $\alpha\in \ZZ_p^\times$   is not a root
of unity.

\end{itemize}
\end{data}

We denote the canonical projection $H^\pi_p\to H^{\pi_\natural}_p $ by $q$  defined by the restriction of characters.
We choose a $\QQ_p$-basis $e'_1,\ldots , e'_{d_1+d_2}$ of $\pi^{(p)}_{\QQ_p}$ consisting
of eigenvectors of $\sigma$ with $\sigma(e'_i)=\alpha^{-1}e'_i$ for $i\le d_1$ and with  $\sigma(e'_i)=\alpha^{-2}e'_i$ for $i> d_1$.
We can furthermore choose the eigenvectors such that
\[
  \pi^{(p)}/{\rm tor} \subset \pi' :=\ZZ_p e'_1 + \cdots + \ZZ_p
  e'_{d_1+d_2} .
\]
We get an isomorphism $T^\pi_p\cong \QQp^{d_1}\oplus \QQp^{d_2}$. Choose an integer $w\ge 0$
 such that $p^w \pi'\subset  \pi^{(p)}/{\rm tor} $.
 On $T^{\pi'}_p$ the linear map  $\sigma$ acts as
 in Section~\ref{sec:ana} and there are $\sigma$-equivariant inclusions $
 T^\pi_p(\rho/p^w) \subset T^{\pi'}_p(\rho) \subset  T^\pi_p(\rho)$.   So via the exponential isomorphism we can deduce from
 Lemma~\ref{lem:conic} and Proposition~\ref{prop:lin} the following result.

 \begin{proposition}\label{prop:Hpi}
    For some  $\rho\in (0,\rho_\circ)\cap \lvert \QQp^\times\rvert$  let $S\subset
    H^\pi_p(\rho)$ be a  closed analytic subset which is
   stabilized by $\sigma$ such that $S\cap  H^\pi_p(\rho/p^w) $ is non-empty
   \begin{itemize}
   \item[1)] Then the character $1$ is in $ S$.
     \item[ 2)] If moreover $1$ is a smooth point of $S$ and if there exists a closed
       analytic subset $S_\natural\subset H^{\pi_\natural}_p(\rho)$ such that $q(S)\subset
       S_\natural$ and such that $d_1 q:T_1 S \to T_1 S_\natural$ is surjective, then
       $S\cap H^\pi_p(\rho/p^w) $ is a subgroup of $H^\pi_p(\rho/p^w) $.
   \end{itemize}

   \end{proposition}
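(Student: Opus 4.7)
The plan is to transfer the problem from the multiplicative polydisc $H^\pi_p(\rho)$ to the additive polydisc $T^\pi_p(\rho)$ via the $p$-adic logarithm $\log : H^\pi_p(\rho) \xrightarrow{\cong} T^\pi_p(\rho)$, which for $\rho \in (0,\rho_\circ)\cap |\QQp^\times|$ is a globally analytic group isomorphism sending the character $1$ to the origin, and then to apply the local $p$-adic analysis developed in Section~\ref{sec:ana}.

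The first step is to identify Data~\ref{data:data0} with the setup of Section~\ref{sec:ana}. The dual action $\sigma^*$ on $T^\pi_p = \Hom_\cont(\pi,\QQ_p)$ is semi-simple: it acts as multiplication by $\alpha^{-1}$ on the subspace $T^{\pi/\pi_\natural}_p \subset T^\pi_p$ of characters vanishing on $\pi_\natural$, and as multiplication by $\alpha^{-2}$ on a $\sigma$-stable complement that maps isomorphically onto $T^{\pi_\natural}_p$ under $q$. Since these eigenvalues lie in $\ZZ_p$, the decomposition is defined over $\QQ_p$; after choosing a $\QQ_p$-basis adapted to it, and possibly shrinking $\rho$ so as to sit inside a polydisc in the new coordinates, we fall within the setup of Section~\ref{sec:ana} with the weighted action \eqref{sigma} for $\alpha$ replaced by $\alpha^{-1}$ (which is likewise not a root of unity), and with the projection $q$ of Data~\ref{data:data0} identified with the projection $P(\rho)\to P_2(\rho)$ of \eqref{pq}. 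The logarithm is $\sigma$-equivariant.

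For part 1), assuming $S\neq \varnothing$, the set $S' = \log(S) \subset T^\pi_p(\rho)$ is closed analytic and $\sigma$-stable, so Lemma~\ref{lem:conic} shows it is conic. Any $x\in S'$ then gives $0 = 0\cdot x \in S'$, whence $1 = \exp(0)\in S$.

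For part 2), $S' = \log(S)$ is moreover smooth at $0$, and $S'_\natural := \log(S_\natural) \subset T^{\pi_\natural}_p(\rho)$ is a closed analytic subset containing $q(S')$. Since $d_1\log = \id$, the surjectivity of $d_1 q : T_1 S \to T_1 S_\natural$ translates to surjectivity of $d_0 q : T_0 S' \to T_0 S'_\natural$. Proposition~\ref{prop:lin} then furnishes a $\QQp$-linear subspace $V\subset T^\pi_p$ with $S' = V\cap P(\rho)$. As $\exp$ is a norm-preserving group isomorphism from the additive polydisc to the multiplicative polydisc, $S = \exp(V\cap P(\rho)) = \exp(V)\cap H^\pi_p(\rho)$ is a subgroup of $H^\pi_p(\rho)$.

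The main technical point is the matching of the weighted action of Section~\ref{sec:ana} with the induced action on $T^\pi_p$; any shrinking of $\rho$ needed there is harmless, since the conicity provided by Lemma~\ref{lem:conic} together with Lemma~\ref{lem:locsame} lets one pass back and forth between different polydisc sizes. Once this identification is in place, both assertions follow mechanically from Lemma~\ref{lem:conic} and Proposition~\ref{prop:lin}.
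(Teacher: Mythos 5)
Your overall route is exactly the paper's: transfer $S$ by $\log$ to the additive polydisc, identify the induced action of $\sigma$ with the weighted action of Section~\ref{sec:ana}, obtain 1) from Lemma~\ref{lem:conic} and 2) from Proposition~\ref{prop:lin}, and return via $\exp$, using that $\exp$ of a linear subspace intersected with the polydisc is a subgroup. The one point where you deviate is the coordinate step, and that is where your write-up does not close. The paper chooses a $\ZZ_p$-basis of $\pi^{(p)}/\mathrm{tor}$ consisting of eigenvectors of $\sigma$ (taking its existence for granted), so that $\log$ identifies $H^\pi_p(\rho)$ with a polydisc aligned with the weight decomposition; no shrinking occurs and the lemmas of Section~\ref{sec:ana} apply to $\log(S)$ itself. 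You instead take only a $\QQ_p$-eigenbasis, shrink $\rho$, and then assert that ``conicity together with Lemma~\ref{lem:locsame} lets one pass back and forth between different polydisc sizes.'' That is circular: the proof of Lemma~\ref{lem:conic} needs the $\overline\ZZ_p$-orbit $\beta\mapsto \beta\cdot x$ of every point of the set to stay inside the domain of convergence of the defining functions, which holds for a polydisc aligned with the eigenspace decomposition but not, in general, for $T^\pi_p(\rho)$ defined by the lattice sup-norm: the eigenprojections have denominators of the shape $\alpha-1$, and $\lvert\alpha-1\rvert$ is small in the situation of Data~\ref{data:data0}, so the weighted orbit of a point of $T^\pi_p(\rho)$ can leave $T^\pi_p(\rho)$. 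Hence conicity of $\log(S)$ in the original disc is not available to you; it is precisely what would have to be proved, and Lemma~\ref{lem:locsame} takes it as a hypothesis.

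Concretely, two things break after shrinking. For 1), $S$ may be nonempty and yet not meet the smaller aligned polydisc at all, in which case applying Lemma~\ref{lem:conic} there yields nothing about $S$, so you cannot conclude $1\in S$. For 2), Proposition~\ref{prop:lin} only gives that $S$ intersected with the smaller disc is linear; to upgrade this to the subgroup property of all of $S\subset H^\pi_p(\rho)$ you would again need conicity (or closed analyticity of $\log(S)$ with respect to an action-stable ambient disc), which is the missing ingredient, not a consequence of Lemma~\ref{lem:locsame}. To make the argument complete you should either secure an adapted $\ZZ_p$-basis — equivalently, that $H^\pi_p(\rho)$ is stable under the weighted $\overline\ZZ_p$-action — as the paper implicitly does, or exploit, as in the paper's actual applications (Propositions~\ref{prop:globaltorsion} and~\ref{prop:globallin}), that the defining functions of the sets in question extend to $H^\pi_p(\rho')$ for every $\rho'<1$, so that one may work on a disc large enough to contain the relevant orbits.
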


\section {Torsion points in   character varieties}\label{sec:torsion}

We start with recalling some simple facts about global character varieties.
Let $\Pi$ be a finitely generated abelian group and let
$A$ be commutative ring.  One defines the functor $\{A-\text{algebras} \}\to  {\rm Ab
}$ to the category of abelian groups which assigns to $B$ the set of characters $\Pi\to
B^\times$.  It is corepresented  by the group ring  $A[\Pi]$.
So we define the {\it character variety} as the commutative group scheme
$$\Mg^\Pi_A=\Spec(A[\Pi]).$$

Recall that $\pi^{(p)}$ denotes the pro-$p$ completion of a pro-finite group $\pi$.

\begin{data} \label{data:data} In this section and the next section we assume that we are given the following data:
\begin{itemize}
  \item
    an inclusion of finitely generated abelian groups $\Pi_\natural\subset \Pi$, the
    pro-finite completion of which  we denote by  $ \pi_\natural \subset \pi$;
  \item a prime number $p$;
  \item a continuous automorphism $\sigma: \pi  \xrightarrow{\simeq}    \pi$
which  induces a $\QQ_p$-linear  semi-simple  map  on     $\pi^{(p)}_{\QQ_p} $
stabilizing $ \pi_{\natural,\QQ_p}^{(p)}$,
   which is multiplication by $\alpha^{-2}$ on  $
   \pi_{\natural,\QQ_p}^{(p)}$ and by $\alpha^{-1}$ on  $ (\pi/\pi_\natural)^{(p)}_{\QQ_p}$,   where  $\alpha\in \ZZ_p^\times$   is not a root
of unity.

\end{itemize}
\end{data}

We denote the projection $\Mg^{\Pi}_{\QQp}\to \Mg^{\Pi_\natural}_{\QQp}$ by $q$.

For a character $\chi\in \Hom_\cont (\pi,\overline \QQ_p^\times ) = \Hom_\cont (\pi,\overline \ZZ_p^\times ),$ we let
$$\bar\chi \in \Hom (\pi,\overline \FF_p^\times )$$
 denote its associated {\it  residue character}. For a character
$\xi \in \Hom (\pi,\overline \FF_p^\times )$ we let $$[\xi]\in \Hom_\cont (\pi,\overline
\QQ_p^\times )$$ denote its {\it Teichm\"uller lift}.  It is defined by post-composing
$\xi$ with the Teichm\"uller lift $\overline \FF_p^\times \hookrightarrow W(\overline
\FF_p)^\times$ followed by the inclusion $W(\overline \FF_p)^\times  \hookrightarrow \overline \QQ_p^\times$.

We consider the composite map
\ml{}{ \varphi: {\rm Hom}_{\rm cont} ( \pi, \overline \QQ_p^\times) = {\rm Hom}_{\rm cont} ( \pi, \overline \ZZ_p^\times)= {\rm Hom}( \Pi,  \overline \ZZ_p^\times) =\Mg^\Pi_{ \overline \ZZ_p }(\overline \ZZ_p )\\
\hookrightarrow{}  {\rm Hom}( \Pi,  \overline \QQ_p^\times)
=
\Mg^\Pi_{ \overline \QQ_p}( \overline \QQ_p) \to \Mg^\Pi_{ \overline \QQ_p }.\notag
}
Note the homomorphisms $\pi\to \pi^{(p)}\to \pi^{(p)}/{\rm tor}$ induce an injective homomorphism
$$H^{\pi}_p \hookrightarrow
{\rm Hom}_{\rm cont} ( \pi, \overline  \QQ_p^\times).$$
In  the following lemma we collect properties of $\varphi$ that we need.

\begin{lemma}\label{lem:irrcomp}
  Let  $\rho\in \lvert \QQp^\times\rvert$  be in the interval $(0,1)$ and let $S\subset \Mg^\Pi_{ \overline \QQ_p}$ be a
 Zariski closed subset.
  \begin{itemize}
    \item[1)]
The preimage $D=\varphi^{-1}(S)\cap H^{\pi}_p(\rho)$
 is a closed
 analytic subset of $H^{\pi}_p(\rho)$ and there is an identification of tangent spaces
 $T_1 D \xrightarrow{\simeq} T_1 S$  if $1$ is in $S$.
\item[ 2)]  If $\sigma$ stabilizes
  $\varphi^{-1}(S)\cap H^\pi_p(\rho)$ then there exists an integer $m>0$ such that
  $\sigma^m$ stabilizes  $\varphi^{-1}(S')\cap H^\pi_p(\rho)$ for each irreducible
  component $S'$ of $S$.
  \item[ 3)] If $S$ is irreducible and if $D=\varphi^{-1}(S) \cap H^{\pi}_p(\rho)$ is
    non-empty then $\varphi(D)$ is dense in $S$.
    \item[ 4)]  If $S$ is irreducible and if $\varphi^{-1}(S) \cap H^{\pi}_p(\rho)$ is a
      subgroup of $H^{\pi}_p(\rho)$ then $S$ is a subtorus of $ \Mg^\Pi_{ \overline \QQ_p }$.
\end{itemize}
\end{lemma}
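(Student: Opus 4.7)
My plan hinges on one key observation that drives all four parts: at every $\chi_0\in H^\pi_p(\rho)$, the map $\varphi$ induces an isomorphism of completed local rings $\widehat{\mathcal O}_{\Mg^\Pi_{\QQp},\varphi(\chi_0)}\xrightarrow{\simeq}\widehat{\mathcal O}_{H^\pi_p(\rho),\chi_0}$. Indeed, choosing $\gamma_1,\ldots,\gamma_d\in\pi^{(p)}/\mathrm{tor}$ as the image of a $\ZZ$-basis of $\Pi/\mathrm{tor}$ gives a $\ZZ_p$-basis, and both completions identify canonically with $\QQp[[z_1,\ldots,z_d]]$ via $z_i=\chi(\gamma_i)-\chi_0(\gamma_i)$, the map being the identity; the only convergence used is $(1+x)^n=\sum_k\binom{n}{k}x^k$ for $n\in\ZZ_p$ and $|x|\le\rho<1$. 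Part 1) is now immediate: taking generators $f_1,\ldots,f_r$ of the radical ideal $I\subset\QQp[\Pi]$ of $S$, each $f_j$ becomes a globally analytic function on $H^\pi_p(\rho)$ via this expansion, $D$ is their common zero locus and hence closed analytic, and at $\chi_0=1\in S$ the iso of completions sends the ideal of $S$ to that of $D$, identifying the reduced cotangent spaces and therefore $T_1D\cong T_1S$.

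For 2), observe that $\sigma$ acting as $\chi\mapsto\chi\circ\sigma^{-1}$ is itself a globally analytic automorphism of $H^\pi_p(\rho)$: writing $\sigma^{-1}(\gamma_j)=\sum_i b_{ij}\gamma_i$ with $b_{ij}\in\ZZ_p$, it sends $x_j$ to $\prod_i(1+x_i)^{b_{ij}}-1$. Any such analytic automorphism permutes the finitely many analytic irreducible components $\Delta_1,\ldots,\Delta_N$ of the $\sigma$-stable closed analytic set $D$, so some $\sigma^m$ fixes each $\Delta_\alpha$. The decisive claim is that each $\Delta_\alpha$ lies in a unique $D_i:=\varphi^{-1}(S_i)\cap H^\pi_p(\rho)$: at a smooth $\chi_0\in\Delta_\alpha$ not lying in any other component, $\widehat{\mathcal O}_{D,\chi_0}=\widehat{\mathcal O}_{\Delta_\alpha,\chi_0}$ is regular, hence a domain, so by the iso of completions $\widehat{\mathcal O}_{S,\varphi(\chi_0)}$ is a domain too, which forces $\varphi(\chi_0)$ to lie on a unique irreducible component $S_i$. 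Hence $D_i=\bigcup_{i(\alpha)=i}\Delta_\alpha$, and $\sigma^m(D_i)=D_i$ for each~$i$.

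For 3), my plan is to reduce Zariski density of $\varphi(D)$ in the irreducible $S$ to injectivity of the canonical ring map $\mathcal O(S)=\QQp[\Pi]/I\to A(D)$. The equivalence holds because $A(D)$ is Jacobson with every residue field equal to $\QQp$ (inherited from the Tate algebra via Proposition~\ref{prop:tatealg} 2)-3)), so an element of $\mathcal O(S)$ vanishes in $A(D)$ iff it vanishes at every point of $\varphi(D)$. For injectivity, pick any $\chi_0\in D$: the iso of completions gives $\widehat{\mathcal O}_{S,\varphi(\chi_0)}\cong\widehat{A(D)_{\mathfrak m_{\chi_0}}}$, and since $S$ is irreducible $\mathcal O(S)$ is a domain, so the composition $\mathcal O(S)\hookrightarrow\mathcal O_{S,\varphi(\chi_0)}\hookrightarrow\widehat{\mathcal O}_{S,\varphi(\chi_0)}$ is injective (localization of a domain followed by Noetherian completion).

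Finally, 4) follows readily: $\varphi$ is a group homomorphism from $H^\pi_p$ to the group of $\QQp$-points of $\Mg^\Pi_{\QQp}$, so $\varphi(D)$ is a subgroup; by 3) its Zariski closure equals $S$, and the Zariski closure of a subgroup in a commutative algebraic group is automatically a closed subgroup scheme by the standard continuity argument for multiplication and inversion. Since $S$ is reduced and irreducible, hence connected, and the connected closed reduced subgroup schemes of the diagonalizable group $\Mg^\Pi_{\QQp}$ are exactly the subtori $\Mg^{\Pi'}$ for $\Pi'$ a torsion-free quotient of $\Pi$, $S$ is a subtorus. The main obstacle throughout is 2): $\sigma$ acts analytically on $H^\pi_p(\rho)$ but not on $\Mg^\Pi_{\QQp}$, so it has no direct relationship to the algebraic decomposition $S=\bigcup S_i$, and the compatibility must be extracted via the analytic-irreducible decomposition of $D$; the trick of using domain-ness of $\widehat{\mathcal O}_{S,\varphi(\chi_0)}$ to show each $\Delta_\alpha$ lies in a unique $D_i$ is what bridges the two sides.
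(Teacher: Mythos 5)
Parts 1), 3) and 4) of your plan are sound, and your key observation is a legitimate substitute for the paper's route: the paper instead uses flatness of $\varphi^*$ (Proposition~\ref{prop:tatealg}~4)) plus going-down and the Jacobson property, whereas your isomorphism of completed local rings at every point of $H^\pi_p(\rho)$ in fact implies that flatness. One point you gloss over in 1) and 3): the ideal of the closed analytic set $D$ is the \emph{radical} of $\varphi^*(I)A(H^\pi_p(\rho))$, so before identifying $T_1D$ with $T_1S$, or $\widehat{\mathcal O}_{S,\varphi(\chi_0)}$ with the completed local ring of $D$, you must know that $\varphi^*(I)$ generates a radical ideal in the completion; this follows because it corresponds to $I\widehat{\mathcal O}_{\Mg^\Pi,\varphi(\chi_0)}$ and completions of reduced excellent local rings are reduced (or one can chase elements directly), but it should be said.

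The genuine gap is at the crux of 2), in the sentence ``Hence $D_i=\bigcup_{i(\alpha)=i}\Delta_\alpha$.'' From ``each $\Delta_\alpha$ lies in a unique $D_{i(\alpha)}$'' you only get the inclusion $\bigcup_{i(\alpha)=i}\Delta_\alpha\subseteq D_i$, while it is the reverse inclusion that you need in order to write $D_i$ as a union of $\sigma^m$-stable sets. A priori an irreducible component $C$ of $D_i$ could be \emph{strictly} smaller than the component $\Delta_\alpha$ of $D$ containing it, with $i(\alpha)=j\neq i$ (the points of $C$ then lying over $S_i\cap S_j$); in that situation $\sigma^m(\Delta_\alpha)=\Delta_\alpha$ gives no control whatsoever on $D_i$. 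The missing statement is exactly what the paper extracts from flatness and going-down: every irreducible component of $D_i=\varphi^{-1}(S_i)\cap H^\pi_p(\rho)$ is an irreducible component of $D$, i.e.\ every prime of $A(H^\pi_p(\rho))$ minimal over $\varphi^*(\mathfrak p_i)$ is minimal over $\varphi^*(I)$. Your framework can supply this: an isomorphism on completed local rings at all maximal ideals yields flatness of $\varphi^*$ (flatness can be checked at maximal ideals and after the faithfully flat passage to completions), after which going-down closes the argument as in the paper. Two smaller remarks on the same step: even your containment $\Delta_\alpha\subseteq D_{i(\alpha)}$ needs the (easy, Jacobson-based) fact that an irreducible closed analytic set covered by finitely many closed analytic sets lies in one of them --- the smooth point only shows $\Delta_\alpha\not\subseteq D_j$ for $j\neq i$ --- and the existence of a smooth point of $\Delta_\alpha$ avoiding the other components should be justified by openness and non-emptiness of the regular locus.
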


\begin{proof}
By Proposition~\ref{prop:tatealg}~4)  the map $\varphi|_{H^\pi_p(\rho)}$ corresponds to a flat ring  homomorphism   $\varphi^*:\sO(\Mg^\Pi_{ \overline \QQ_p}) \to
A(H^\pi_p(\rho))$ from the algebraic functions on $\Mg^\Pi_{\QQp} $ to the globally analytic functions
on $H^\pi_p(\rho)$.  This ring homomorphism $\varphi^*$ induces an
  isomorphism on the
completions of local rings
$$\sO(\Mg^\Pi_{ \overline \QQ_p})_{(\varphi^*)^{-1}(\mathfrak m)}^\wedge \xrightarrow{\simeq}
A(H^\pi_p(\rho))_{\mathfrak m}^\wedge$$ for any maximal ideal $\mathfrak m\subset
A(H^\pi_p(\rho))$. In particular, it induces an isomorphism of tangent spaces as in~1).  
If $S$ is the zero set of the finitely many algebraic funtions $f_i\subset \sO(\Mg^\Pi_{ \overline
  \QQ_p})$, then $\varphi^{-1}(S)\cap H^\pi_p(\rho)$ is the zero set of the analytic functions
$\varphi^*(f_i)$ and therefore defines a closed analytic subset.  This proves 1).

The closed subset $S$ is defined by an ideal $I$ which is the intersection of
the  finitely many minimal prime ideals $\frak{p}_i\supset I$  defining its irreducible
components. Any prime ideal $\mathfrak q\subset A(H^\pi_p(\rho))$ which is minimal
containing $\varphi^*(\mathfrak p_i )$ is also minimal over $\varphi^*(I)$. This follows
from going-down for the flat map $\varphi^*$.
 Expressed geometrically this means that for  an irreducible component $S'$ of $S$,
  $\varphi^{-1}(S')\cap H^\pi_p(\rho)$ consists of a finite union
of irreducible components of  $\varphi^{-1}(S)\cap H^\pi_p(\rho)$. As $\sigma$ is an
isomorphism, it permutes the finitely many irreducible components of
$\varphi^{-1}(S)\cap H^\pi_p(\rho)$.
So in 2) we can
choose $m>0$ such that $\sigma^m$ stabilizes all these irreducible components.  This
finishes the proof of~2).

The conditions of 3) imply that $I =(\varphi^*)^{-1}(\varphi^*(I) A(H^\pi_p(\rho)) )$. The closure of
$\varphi(D)$ is defined by the ideal
\eq{eq.intseclem}{
  \cap_{\mathfrak m} (\varphi^*)^{-1}(\mathfrak m )
  =  (\varphi^*)^{-1} (\cap_{\mathfrak m} \mathfrak m )
}
where $\mathfrak m$ runs through the maximal ideals of $A(H^\pi_p(\rho))$ containing
$\varphi^*(I)$. By Proposition~\ref{prop:tatealg} 4) we see that $\cap_{\mathfrak m} \mathfrak m $ is
the nilradical of the ideal of $A(H^\pi_p(\rho)) $ generated by $\varphi^*(I)$, so the ideal in the equation 
\eqref{eq.intseclem} is equal to $I$, in
other words the closure of $\varphi(D)$ is $S$. This proves~3).

In order to show 4) it is sufficient to show that $S(\QQp )$ is a subgroup of
$\Mg^\Pi_{\QQp}( \QQp )$. Consider the commutative diagram
\[
  \xymatrix{
    D\times D \ar[d]_{\varphi\times \varphi} \ar[r] & D \ar[d]^{\varphi} \\
      S(\QQp) \times S(\QQp )  \ar[r] & \Mg^\Pi_{\QQp}( \QQp )
  }
\]
in which the horizontal maps are the group operations   $(a,b) \mapsto a-b$ and
where $D=\varphi^{-1}(S)\cap H^\pi_p(\rho)$. The image of the left vertical map is Zariski
dense by part~(iii). The image of the right vertical map is contained in $S(\QQp)$, so
the same
is true for the image of the lower horizontal map.
\end{proof}

For a character $\xi \in \Hom_\cont(\pi , \overline\FF_p^\times),$ we let $\Hom_\cont(\pi ,
\overline\QQ_p^\times)^\xi$ denote the set of characters $\chi \in \Hom_\cont(\pi ,
\overline\QQ_p^\times)$ with $\bar \chi = \xi$.

\begin{proposition}\label{prop:globaltorsion}
  Let $\xi \in \Hom_\cont(\pi , \overline\FF_p^\times) $ be fixed by $\sigma$.
 Let   $S\subset \Mg^\Pi_{\overline \QQ_p}$ be a  closed subset such that
 $\varphi^{-1}(S)\cap \Hom_\cont(\pi ,
\overline\QQ_p^\times)^\xi$ is stabilized by $\sigma$. Let $S'\subset S$ be an irreducible
component such that $\varphi^{-1}(S')\cap \Hom_\cont(\pi ,
\overline\QQ_p^\times)^\xi$ is non-empty.  Then there exists a torsion point
$\chi\in S'$ with $\bar\chi = \xi$.
\end{proposition}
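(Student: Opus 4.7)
The plan is to reduce to the case $\xi=1$ by translating by the Teichm\"uller lift of $\xi$, and then to apply the conicity result Proposition~\ref{prop:Hpi}(1) to deposit the trivial character inside $S'$.

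For the reduction, let $[\xi]\colon\pi\to W(\overline{\FF}_p)^\times\hookrightarrow\ZZp^\times$ be the Teichm\"uller lift; it is a torsion character whose residue equals $\xi$, and since the Teichm\"uller section is functorial in continuous homomorphisms while $\xi$ is $\sigma$-fixed, $[\xi]$ is itself $\sigma$-fixed. Consequently multiplication by $[\xi]^{-1}$ is a $\sigma$-equivariant automorphism of $\Mg^\Pi_{\QQp}$ sending $\Hom_\cont(\pi,\QQp^\times)^\xi$ onto $\Hom_\cont(\pi,\QQp^\times)^1$. Replacing $(S,S')$ by $([\xi]^{-1}S,[\xi]^{-1}S')$ we may assume $\xi=1$, and a torsion character in the new $S'$ of residue~$1$ will, after multiplication back by $[\xi]$, give the sought torsion character in the original $S'$ of residue~$\xi$; it therefore suffices to show $1\in S'$.

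Pick $\chi_0\in\varphi^{-1}(S')\cap\Hom_\cont(\pi,\QQp^\times)^1$. Because $\overline{\chi_0}=1$ and $\pi^{(p)}/\mathrm{tor}$ is topologically finitely generated, $\chi_0\in H^\pi_p(\rho)$ for some $\rho\in\lvert\QQp^\times\rvert\cap(0,1)$. By Lemma~\ref{lem:irrcomp}(1), the set
\[
D:=\varphi^{-1}(S')\cap H^\pi_p(\rho)
\]
is a non-empty closed analytic subset of $H^\pi_p(\rho)$, and by Lemma~\ref{lem:irrcomp}(2) some power $\sigma^m$ of $\sigma$ stabilizes $D$. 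The automorphism $\sigma^m$ still satisfies Data~\ref{data:data} with $\alpha$ replaced by the non-root-of-unity $\alpha^m$, so Proposition~\ref{prop:Hpi}(1) applied to $(D,\sigma^m)$ yields $1\in D\subseteq\varphi^{-1}(S')$, hence $1\in S'$. Translating back, $[\xi]\in S'$ is the required torsion character of residue~$\xi$.

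The main obstacle is that Proposition~\ref{prop:Hpi}(1) is only stated for $\rho<\rho_\circ$, whereas the bound on $\chi_0$ merely gives $\rho<1$. To reach the smaller disc one either shrinks $\chi_0$ via a suitable $p$-power $\chi_0^{p^N}$, correspondingly replacing $S'$ by an irreducible component of $[p^N]^{-1}(S')$ that is preserved by some power of $\sigma$---using that $[p^N]$ commutes with $\sigma$ on $\Mg^\Pi_{\QQp}$ and that $\sigma$ permutes the finite set of such components---or, alternatively, one reproves the conicity of Lemma~\ref{lem:conic} directly on $H^\pi_p(\rho)$ for arbitrary $\rho<1$ using the globally analytic $p$-adic logarithm, whose kernel consists precisely of the $p$-power roots of unity characters. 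Once this technical point is settled the remainder of the argument is bookkeeping around Lemma~\ref{lem:irrcomp} and Proposition~\ref{prop:Hpi}.
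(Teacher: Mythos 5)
Your skeleton (reduce to $\xi=1$ by translating by the Teichm\"uller lift, then use $\sigma$-stability together with Lemma~\ref{lem:irrcomp} and Proposition~\ref{prop:Hpi}~1) to force a torsion point into $S'$) is the same as the paper's, but the step you yourself flag as the ``main obstacle'' is exactly where the proof lives, and neither of your proposed fixes works as stated. The paper's device is to push \emph{forward} by the finite map $[p^n]$: it replaces $S'$ by the closed irreducible subset $[p^n](S')\subset[p^n](S)$, uses the identity $\varphi^{-1}([p^n](S))=[p^n](\varphi^{-1}(S))$ to transfer $\sigma$-stability, chooses $n$ so large that $\psi^{p^n}$ lies in $H^\pi_p(\rho')$ with $\rho'<\rho_\circ$ (and kills the $p$-torsion of $\pi^{(p)}$ --- note that a character of residue $1$ need not lie in $H^\pi_p$ at all, so your assertion ``$\chi_0\in H^\pi_p(\rho)$'' already glosses over this), and only then applies Lemma~\ref{lem:irrcomp}~2) and Proposition~\ref{prop:Hpi}~1) to get $1\in[p^n](S')$; this yields a $p^n$-torsion point of $S'$, automatically of residue $1$. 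Your fix (a) instead passes to an irreducible component of the \emph{preimage} $[p^N]^{-1}(S')$. That is the wrong direction: $\chi_0^{p^N}$ lies in $\varphi^{-1}([p^N](S'))$, not in $\varphi^{-1}$ of any component of $[p^N]^{-1}(S')$, and the points of $\varphi^{-1}([p^N]^{-1}(S'))$ are $p^N$-th roots of points of $\varphi^{-1}(S')$, which are in general no closer to $1$ (typically farther), so the shrinking into the disc of radius $<\rho_\circ$ is not achieved. Replacing ``preimage'' by ``image'' (closedness of $[p^N](S')$ coming from finiteness of $[p^N]$) turns (a) into the paper's argument.

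Fix (b) is not a routine extension either. The conic structure underlying Lemma~\ref{lem:conic} and Proposition~\ref{prop:Hpi}~1) is the weighted $\ZZp$-action $\beta\cdot x$, transported to characters via $\exp$/$\log$; on $H^\pi_p(\rho)$ with $\rho\ge\rho_\circ$ the corresponding map $\beta\mapsto\chi^{(\beta,\beta^2)}$ is only locally analytic in $\beta$ (the exponential, equivalently the binomial series $\sum_n\binom{\beta}{n}(\chi(\gamma)-1)^n$, diverges once $\lvert\log\chi(\gamma)\rvert\ge\rho_\circ$), whereas the Strassmann argument needs a single globally analytic function of $\beta$ on a disc containing both the infinitely many zeros $\alpha^n$ (which have absolute value $1$) and the point $\beta=0$; a small disc around $0$ contains none of the $\alpha^n$, and a small disc around $1$ does not contain $0$. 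Knowing that $\log$ is globally analytic for all $\rho<1$ with kernel the $p$-power torsion characters does not repair this, since the image of a closed analytic set under $\log$ need not be closed analytic and no interpolation through $\beta=0$ is produced. So the passage to $[p^n](S')$ (or an equivalent device) is genuinely needed, and as written your argument has a gap precisely at this point.
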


\begin{proof}
 As the Teichm\"uller lift $[\xi]$ of $\xi$ is fixed by $\sigma$,  and has the same order as $\xi$,  translating by $[\xi]$
 is $\sigma$-equivariant on $ \Hom_\cont(\pi ,\overline\QQ_p^\times)$ and preserves the
 torsion points. So
we can replace $S$ by $[\xi^{-1}]\cdot S$  in the following and thereby assume that
$\xi=1$.

For $n>0$ we consider the Zariski closed subsets $[p^n](S')\subset [p^n](S)$ of $\Mg_{\overline
  \QQ_p}^\Pi$. We note that
$ \varphi^{-1}([p^n](S)) = [p^n] ( \varphi^{-1}(S )).$
This implies that $ \varphi^{-1}([p^n](S)) \cap H^\pi_p$ is stabilized by $\sigma$.

Choose a character $\psi\in \varphi^{-1}(S')\cap H^\pi_p$.
We fix $0<\rho'<\rho_\circ= p^{-\frac{1}{p-1}}$ and choose $n$ large such that
$ [p^n](\psi)$ lies in $H^{\pi}_p(\rho'/p^w )$, where $w$ is as in Proposition~\ref{prop:Hpi}.
 By Lemma~\ref{lem:irrcomp} there exists $m>0$ such that
$\sigma^m$ stabilizes
$D=\varphi^{-1}([p^n](S'))\cap H^{\pi}_p(\rho').$
From Proposition~\ref{prop:Hpi} 1) we deduce that $ D$ contains $1$,
therefore
 $[p^n](S')$ contains $1$ and $S'$ contains a torsion point.

\end{proof}

\begin{corollary} \label{cor:globaltorsion}
  Let   $S\subset \Mg^\Pi_{\overline \QQ_p}$ be a closed subset such that
  $\varphi^{-1}(S)$  is stabilized by $\sigma$.  Then the
  torsion points are dense on each irreducible component $S'$ of $S$ for which  $\varphi^{-1} (S')$ is non-empty.
\end{corollary}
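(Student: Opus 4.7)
Plan: The plan is to deduce the corollary from Proposition~\ref{prop:globaltorsion} by applying it to a sufficient variety of residue characters, combined with the density statement of Lemma~\ref{lem:irrcomp}(3).

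Fix an irreducible component $S'$ of $S$ with $\varphi^{-1}(S')\neq\varnothing$. By Lemma~\ref{lem:irrcomp}(2) I may replace $\sigma$ by a positive power so that $\sigma$ itself stabilizes $\varphi^{-1}(S')$; the hypotheses of Data~\ref{data:data} survive this replacement since a positive power of $\alpha$ is again a non-root of unity.

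Next I turn to producing torsion points for each appearing residue. For any character $\chi\in\varphi^{-1}(S')$, the residue $\bar\chi$ has finite image in $\overline\FF_p^\times$, so factors through a finite $\sigma$-stable quotient of $\pi$ and hence has finite $\sigma$-orbit. Replacing $\sigma$ by a further power depending on $\bar\chi$ (which preserves $\sigma$-stability of $\varphi^{-1}(S')$), I arrange $\sigma(\bar\chi)=\bar\chi$. Then $\sigma$ stabilizes $\varphi^{-1}(S')\cap\Hom_\cont(\pi,\overline\QQ_p^\times)^{\bar\chi}$, and Proposition~\ref{prop:globaltorsion} applied to $S'$ itself yields a torsion character in $S'$ with residue $\bar\chi$.

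The main obstacle is to upgrade ``one torsion character per appearing residue'' to genuine Zariski density in $S'$. Let $T\subset S'$ denote the Zariski closure of the torsion characters in $S'$; I aim to show $T=S'$. Suppose toward contradiction that $T\subsetneq S'$. By Lemma~\ref{lem:irrcomp}(3), the image $\varphi(\varphi^{-1}(S')\cap H^\pi_p(\rho))$ is Zariski dense in $S'$, so some $\chi\in\varphi^{-1}(S')\cap H^\pi_p(\rho)$ satisfies $\varphi(\chi)\notin T$; note $\bar\chi=1$ is automatically $\sigma$-fixed. I would then sharpen the argument of Proposition~\ref{prop:globaltorsion} by exploiting the conic structure: the set $D_n=\varphi^{-1}([p^n](S'))\cap H^\pi_p(\rho')$ is a $\sigma$-stable closed analytic subset which contains both $1$ and $[p^n](\chi)$, and by Lemma~\ref{lem:conic} is $\overline\ZZ_p$-conic under the weighted scalar action. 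The hard part is to use this conicity around $[p^n](\chi)$, rather than merely around $1$, to locate torsion characters of $[p^n](S')$ accumulating at $[p^n](\chi)$; lifting these back to $S'$ produces torsion characters whose images lie outside $T$, contradicting the definition of $T$ and finishing the proof.
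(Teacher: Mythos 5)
Your reduction to Proposition~\ref{prop:globaltorsion} is the right starting point, and the observation that each $\sigma$-fixed residue character $\xi$ yields \emph{some} torsion point of $S'$ with residue $\xi$ is correct (modulo a small misquotation: Lemma~\ref{lem:irrcomp}~2) only lets you stabilize $\varphi^{-1}(S')\cap H^\pi_p(\rho)$ by a power of $\sigma$, not all of $\varphi^{-1}(S')$; but this is harmless, since Proposition~\ref{prop:globaltorsion} only requires $\varphi^{-1}(S)\cap \Hom_\cont(\pi,\overline\FF_p^\times)^\xi$-type stability for the ambient $S$, which follows directly from the hypothesis once $\xi$ is $\sigma$-fixed). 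The genuine gap is exactly where you flag ``the hard part'': the upgrade from one torsion point per residue to Zariski density is not carried out, and the mechanism you propose does not work as described. Lemma~\ref{lem:conic} gives conicity only for the weighted $\overline\ZZ_p$-action centered at the trivial character; there is no conic structure centered at $[p^n](\chi)$, and the orbit $\overline\ZZ_p\cdot x$ of a non-torsion point contains in general no nontrivial torsion characters, so nothing in the argument produces torsion points accumulating near $[p^n](\chi)$ or, after lifting, landing outside your closed set $T$.

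The paper closes this gap by a quite different, and purely global, device: vary the residue character. Given a nonempty open $U\subset S'$, let $\mathcal S'$ be the closure of $S'$ in $\Mg^\Pi_{\overline\ZZ_p}$ and $\sV$ the closure of $S'\setminus U$; then $\dim(\mathcal S'_{\overline\FF_p})=\dim S'$ while $\dim(\sV_{\overline\FF_p})\le \dim S'-1$, so one can pick $\xi\in \mathcal S'(\overline\FF_p)\setminus \sV(\overline\FF_p)$, i.e.\ a residue character on the special fibre of $\mathcal S'$ avoiding the special fibre of $\sV$. Such a $\xi$ is fixed by a power of $\sigma$, and---this is the point your argument lacks---\emph{every} character of $S'(\overline\ZZ_p)$ with residue $\xi$ is automatically forced into $U$, because its reduction misses $\sV_{\overline\FF_p}$. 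Applying Proposition~\ref{prop:globaltorsion} to $S$, $S'$ and this $\xi$ then produces a torsion point lying in $U$, and since $U$ was arbitrary the torsion points are dense. In short: instead of trying to move torsion points around inside one residue disc (your conicity idea), one chooses the residue disc itself, using the mod $p$ dimension count on $\Mg^\Pi_{\overline\ZZ_p}$, so that any torsion point it contributes already avoids the prescribed proper closed subset.
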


\begin{proof}
 Let $S'\subset S$ be an irreducible component of $S$ for which  $\varphi^{-1} (S')$  is
 non-empty and let $U\subset S'$ be a non-empty open subset.
The closure   $\mathcal S'$ of $S'$  in $\Mg^\Pi_{\overline \ZZ_p}$ has the
property that $\dim(\mathcal S'_{\overline \FF_p})=\dim(S')\ge 0$.
 The closure $\sV$ of
 $S^{'} \setminus U $ in  $\Mg^\Pi_{\overline \ZZ_p}$ has the property that $\dim(\sV_{\overline \FF_p})\le \dim(S')-1$. So
 we can find $\xi\in \mathcal
S'(\overline \FF_p)\setminus \sV(\overline \FF_p)$.  The point  $\xi$ is the moduli point of a character in
$\Hom_\cont(\pi,\overline \FF_p^\times)$. So there is an  $m >0$ such  that $\sigma^m$
 fixes $\xi$. We may assume without loss of generality that $m=1$.
Note that by the choice of $\xi$, all characters $\chi\in S'( \overline \ZZ_p)$ with $\bar
\chi=\xi$ automatically lie in $U( \overline \QQ_p)$.

Now $S$, $S'$ and $\xi$ satisfy the assumptions of Proposition~\ref{prop:globaltorsion} and the
$\chi$ constructed there is a torsion point on $U$.
 \end{proof}

 \begin{remark}\label{rmk.morlan} As observed by M.~Groechenig, 
Corollary~\ref{cor:globaltorsion} together with the Mordell--Lang conjecture for tori,
shown by M.~Laurent in \cite[Introduction]{Lau84} using Diophantine approximation, see Proposition~\ref{prop.morlan},
immediately implies Theorem~\ref{thm:critsubtorus}. However, in Section~\ref{sec:p-global}
we explain how to deduce the latter
purely in terms of elementary $p$-adic analysis. 
 \end{remark}

\begin{corollary}  With the notation of Propositon~\ref{prop:globaltorsion}, $\chi \in {\rm Hom}_{\rm cont} ( \pi, \overline \QQ_p^\times)  $ is fixed by a power of  $\sigma$
if and only if it is torsion.
\end{corollary}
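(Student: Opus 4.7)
The direction that a torsion character $\chi$ is fixed by a power of $\sigma$ is immediate: if $\chi$ has order $N$, then $\chi$ lies in the finite set $\Hom_\cont(\pi, \mu_N)$, on which $\sigma$ acts by permutation, so the $\sigma$-orbit of $\chi$ is finite and some power of $\sigma$ fixes $\chi$. The content of the corollary is the converse.

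Suppose then that $\sigma^m(\chi) = \chi$ for some $m \geq 1$. I plan to deduce the torsion statement by applying Proposition~\ref{prop:globaltorsion}, with $\sigma$ replaced by $\sigma^m$, to the single-point closed subset $S = \{\varphi(\chi)\}$ of $\Mg^\Pi_{\overline\QQ_p}$. First, I would observe that $\sigma^m$ satisfies Data~\ref{data:data} with $\alpha$ replaced by $\alpha^m$; the only non-trivial point is that $\alpha^m \in \ZZ_p^\times$ remains a non-root of unity, which is immediate from the hypothesis on $\alpha$. Second, setting $\xi = \bar\chi$, taking residue characters is $\sigma$-equivariant, so $\sigma^m(\xi) = \overline{\sigma^m(\chi)} = \xi$. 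Third, since $\overline\QQ_p$ is algebraically closed and $\Pi$ is finitely generated abelian, every $\overline\QQ_p$-point of $\Mg^\Pi_{\overline\QQ_p} = \Spec(\overline\QQ_p[\Pi])$ is a closed point, so $S = \{\varphi(\chi)\}$ is an irreducible closed subset coinciding with its unique irreducible component $S'$. Fourth, the map $\varphi$ is injective, being the composition of the identification $\Hom_\cont(\pi, \overline\ZZ_p^\times) = \Hom(\Pi, \overline\ZZ_p^\times)$, the inclusion into $\Hom(\Pi, \overline\QQ_p^\times)$, and the injective passage to closed points. Hence $\varphi^{-1}(S) = \{\chi\}$, which lies in $\Hom_\cont(\pi, \overline\QQ_p^\times)^\xi$ and is trivially stable under $\sigma^m$. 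Proposition~\ref{prop:globaltorsion} then produces a torsion point in $S'$, which can only be $\varphi(\chi)$ itself, so $\chi$ is torsion.

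I do not anticipate a serious obstacle: the whole argument reduces the statement to a direct invocation of Proposition~\ref{prop:globaltorsion} after shrinking $S$ to a single closed point, and all the auxiliary checks are formal. The only minor subtleties are the injectivity of $\varphi$ (which rests on the density of $\Pi$ in $\pi$ and on $\overline\QQ_p$ being algebraically closed) and the fact that Data~\ref{data:data}, stated for $\sigma$, is inherited by any non-trivial power $\sigma^m$.
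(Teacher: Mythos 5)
Your proof is correct and is essentially the paper's own (implicit) derivation of this corollary: the easy direction follows from the finiteness of the set of characters of bounded order, and the converse by applying Proposition~\ref{prop:globaltorsion}, with $\sigma$ replaced by $\sigma^m$ and $\xi=\bar\chi$, to the one-point closed subset $S=\{\varphi(\chi)\}\subset\Mg^\Pi_{\QQp}$. Your auxiliary verifications — that Data~\ref{data:data} is inherited by $\sigma^m$ with $\alpha^m$ still not a root of unity, that $\varphi$ is injective so $\varphi^{-1}(S)=\{\chi\}$, and that $\QQp$-points of $\Mg^\Pi_{\QQp}$ are closed — are all sound.
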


\section{ Criterion for a closed subset of $\Mg^\Pi_{\QQp}$ to be a subtorus  }\label{sec:p-global}
 The notation is as in Section~\ref{sec:torsion}.

\begin{proposition} \label{prop:globallin}
  Let $S\subset \Mg^\Pi_{\overline \QQ_p}$ be an irreducible closed subset such that
  $\varphi^{-1}(S)\cap H^\pi_p(\rho)$
  is stabilized by $\sigma$ for some $\rho\in (0,1)\cap\lvert \QQp^\times\rvert$.  
  Assume that $1$ is a regular point of $S$ and that there exists a
  closed subset $S_\natural \subset  \Mg^{\Pi_\natural}_{\overline \QQ_p}$ such that $q(S)\subset
  S_\natural$ and such that $d_1 q: T_1 S \to T_1 S_\natural$ is surjective.
 Then $S$ is a subtorus of $\Mg^\Pi_{\overline \QQ_p}$.
\end{proposition}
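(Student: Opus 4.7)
The plan is to translate the setup to the $p$-adic analytic setting via exponential coordinates, apply Proposition~\ref{prop:Hpi}~2) to obtain a local subgroup statement, and then globalize through Lemma~\ref{lem:irrcomp}~4). Concretely, I would fix $\rho' \in (0, \min(\rho, \rho_\circ)) \cap \lvert \QQp^\times \rvert$ and set $D := \varphi^{-1}(S) \cap H^\pi_p(\rho')$. Since the eigenvalues of $\sigma$ on $\pi^{(p)}_{\QQ_p}$ are units in $\ZZ_p^\times$, the induced action on characters preserves the sup-norm, so $\sigma$ stabilizes $H^\pi_p(\rho')$ and hence $D$. By Lemma~\ref{lem:irrcomp}~1), $D$ is a closed analytic subset of $H^\pi_p(\rho')$; the hypothesis that $1$ is a regular point of $S$ forces $1 \in S$, hence $1 \in D$ and $T_1 D \xrightarrow{\simeq} T_1 S$.

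The two conditions to verify before invoking Proposition~\ref{prop:Hpi}~2) are that $1$ is smooth on $D$ and that the surjectivity of $d_1 q$ transfers to the analytic side. For smoothness, the map $\varphi^*: \sO_{\Mg^\Pi_{\QQp}, 1} \to A(H^\pi_p(\rho'))_1$ is flat by Proposition~\ref{prop:tatealg}~4), and sends the maximal ideal corresponding to $1$ to a maximal ideal with residue field $\QQp$ by Proposition~\ref{prop:tatealg}~2). Both source and target are regular local rings of Krull dimension $r = \mathrm{rk}\,\Pi$, so the fibre of this flat map at $1$ is zero-dimensional. Base-changing along $\sO_{\Mg^\Pi_{\QQp}, 1} \to \sO_{S,1}$ preserves flatness with zero-dimensional fibre, giving $\dim A(D)_1 = \dim \sO_{S,1}$. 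Combined with $T_1 D \simeq T_1 S$ and the regularity of $1 \in S$, this forces $A(D)_1$ to be regular. For the analytic $q$-condition, I would take $D_\natural := \varphi^{-1}(S_\natural) \cap H^{\pi_\natural}_p(\rho')$; compatibility of $\varphi$ with the projections gives $q(D) \subset D_\natural$, and the tangent-space identifications of Lemma~\ref{lem:irrcomp}~1) on both sides intertwine $d_1 q: T_1 D \to T_1 D_\natural$ with the hypothesized surjection $d_1 q: T_1 S \to T_1 S_\natural$.

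With all hypotheses of Proposition~\ref{prop:Hpi}~2) now satisfied, $D$ is a subgroup of $H^\pi_p(\rho')$, and Lemma~\ref{lem:irrcomp}~4) (using irreducibility of $S$) then gives that $S$ is a subtorus of $\Mg^\Pi_{\QQp}$. The main technical obstacle I anticipate is the dimension-theoretic verification that $1 \in D$ is smooth from smoothness of $1 \in S$; everything else is a formal transport of data from the algebraic to the analytic side along $\varphi$.
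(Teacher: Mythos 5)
Your proof is correct and follows essentially the same route as the paper's: shrink $\rho$ below $\rho_\circ$, transport $S$ and $S_\natural$ to the closed analytic subsets $D$ and $D_\natural$ via Lemma~\ref{lem:irrcomp}~1), apply Proposition~\ref{prop:Hpi}~2) to conclude that $D$ is a subgroup, and finish with Lemma~\ref{lem:irrcomp}~4). The only difference is that you spell out, via flatness and the dimension count, the transfer of regularity at $1$ from $S$ to the analytic set $D$, a step the paper leaves implicit in its citation of the tangent-space identification.
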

\begin{proof}
  We can assume without loss of generality that $\rho<\rho_\circ$, see Section~\ref{sec:exp}.
Write $D$ for $\varphi^{-1}(S)\cap H^\pi_p(\rho)$ and $D_\natural$ for $\varphi^{-1}(S_\natural)\cap H^{\pi_\natural}_p(\rho)$.
By Lemma~\ref{lem:irrcomp}~1) we have a canonical isomorphism of tangent spaces $T_1
D\cong
T_1 S $ and $T_1 D_\natural \cong T_1 S_\natural $, so Proposition~\ref{prop:Hpi}~2)
implies that $D\cap H^\pi_p(\rho/p^w) $ is a subgroup of $H^\pi_p(\rho/p^w) $ for some
integer $w\ge 0$. We deduce from
Lemma~\ref{lem:irrcomp}~4) that $S$ is a subtorus.
\end{proof}

\begin{theorem} \label{thm:critsubtorus}
  Let $S\subset \Mg^\Pi_{\overline \QQ_p}$ be a closed subset such that $ \varphi^{-1}(S)$
  is stabilized by $\sigma$. Then each irreducible component $S'$ of $S$ with
  $\varphi^{-1}(S')$ non-empty is a subtorus $T$ translated by a torsion character.

\end{theorem}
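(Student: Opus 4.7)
The plan is to reduce to the case where $S$ is irreducible, then translate a suitably chosen torsion character of $S$ to the identity so that Proposition~\ref{prop:globallin} applies. Since $\sigma$ is an automorphism of $\Mg^\Pi_{\overline\QQ_p}$, it permutes the finitely many irreducible components of $S$, so after replacing $\sigma$ by a sufficiently large power (which only replaces $\alpha$ by a power, still not a root of unity, and hence preserves Data~\ref{data:data}) I may assume $\sigma$ stabilizes $\varphi^{-1}(S')$ for every irreducible component $S'$ of $S$. Fix such an $S'$ with $\varphi^{-1}(S')\neq\varnothing$.

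By Corollary~\ref{cor:globaltorsion}, torsion characters are Zariski dense in $S'$. The projection $q|_{S'}\colon S'\to\overline{q(S')}$ is a dominant morphism of irreducible varieties in characteristic zero, so by generic smoothness the locus $U\subset S'$ on which $q|_{S'}$ is smooth is open dense; for any $\chi\in U$ the variety $S'$ is smooth at $\chi$, $\overline{q(S')}$ is smooth at $q(\chi)$, and the differential $d_\chi q\colon T_\chi S'\to T_{q(\chi)}\overline{q(S')}$ is surjective. By density of torsion characters, I pick a torsion character $\chi\in U$. Since $\chi$ has some finite order $n$ and $\sigma$ permutes the finite set of characters of $\pi$ of order dividing $n$, after replacing $\sigma$ by a further power (again preserving Data~\ref{data:data}) I may assume $\sigma(\chi)=\chi$.

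Now I translate: set $S'':=\chi^{-1}\cdot S'$ and $S_\natural:=q(\chi)^{-1}\cdot\overline{q(S')}$. Then $S''\subset\Mg^\Pi_{\overline\QQ_p}$ is irreducible with $1$ as a smooth point, $S_\natural\subset\Mg^{\Pi_\natural}_{\overline\QQ_p}$ is a closed subset containing $q(S'')$, and $d_1 q\colon T_1 S''\to T_1 S_\natural$ is surjective by the choice of $\chi\in U$. Because $\sigma(\chi)=\chi$ and $\sigma$ stabilizes $\varphi^{-1}(S')$, it also stabilizes $\varphi^{-1}(S'')=\chi^{-1}\cdot\varphi^{-1}(S')$ and hence its intersection with $H^\pi_p(\rho)$ for any $\rho\in(0,1)\cap\lvert\overline{\QQ}_p^\times\rvert$. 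Proposition~\ref{prop:globallin} therefore applies to $S''$ and concludes that $S''$ is a subtorus, whence $S'=\chi\cdot S''$ is a torsion translate of a subtorus, as required.

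The main obstacle is arranging the smoothness of $1\in S''$ and the tangent-space surjectivity condition of Proposition~\ref{prop:globallin} simultaneously at a torsion point; naively, a generic smooth torsion point of $S'$ need not make $d_1 q$ surjective after translation. The key observation that resolves this is that the open dense generic-smoothness locus of $q|_{S'}$ in characteristic zero gives both conditions at once, and Corollary~\ref{cor:globaltorsion} guarantees that this locus still contains a torsion character at which to center the translation.
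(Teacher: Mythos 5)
Your overall strategy --- produce a torsion point of $S'$ in a locus where the projection $q$ behaves well (Corollary~\ref{cor:globaltorsion}), translate it to the trivial character, and invoke Proposition~\ref{prop:globallin} --- is exactly the paper's proof. But your opening reduction contains a genuine gap: you assert that ``$\sigma$ is an automorphism of $\Mg^\Pi_{\overline\QQ_p}$'' and hence permutes the irreducible components of $S$, so that a power of $\sigma$ stabilizes $\varphi^{-1}(S')$ for each component $S'$. In fact $\sigma$ does not act on the character variety at all: it is an automorphism of the profinite group $\pi$, so it acts on $\Hom_\cont(\pi,\QQp^\times)$ but not on $\Spec(\QQp[\Pi])$ (it has no reason to preserve the discrete group $\Pi$ inside $\pi$), and the hypothesis of the theorem only says that the subset $\varphi^{-1}(S)$ of the continuous character space is stabilized. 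A priori $\sigma$ could mix the pieces $\varphi^{-1}(S_i)$ attached to the algebraic components $S_i$, so the statement you actually need --- that some power of $\sigma$ stabilizes $\varphi^{-1}(\chi^{-1}\cdot S')\cap H^\pi_p(\rho)$, which is the hypothesis of Proposition~\ref{prop:globallin} --- requires an argument. This is precisely what Lemma~\ref{lem:irrcomp}~2) supplies: by flatness of $\varphi^*$ (Proposition~\ref{prop:tatealg}~4)) and going-down, $\varphi^{-1}(S')\cap H^\pi_p(\rho)$ is a finite union of analytic irreducible components of $\varphi^{-1}(S)\cap H^\pi_p(\rho)$, and $\sigma$ does permute those finitely many analytic components, whence some $\sigma^m$ stabilizes each such union. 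With this substitution (applied after translating by the torsion character, which is $\sigma$-equivariant once a power of $\sigma$ fixes it) your argument closes and coincides with the paper's. Note also that Corollary~\ref{cor:globaltorsion} gives density of torsion points on each component $S'$ directly from the hypothesis on $S$, so no preliminary component-wise stabilization is needed for that step.

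A second, minor, point: smoothness of the morphism $q|_{S'}$ at $\chi$ does not by itself give regularity of $S'$ at $\chi$ nor of $\overline{q(S')}$ at $q(\chi)$ (a morphism can be smooth with singular source onto a singular target). You should choose $U$ inside the intersection of the regular locus of $S'$, the preimage of the regular locus of $\overline{q(S')}$, and the smooth locus of $q|_{S'}$; in characteristic zero this is still open and dense, which is exactly what the paper arranges via Chevalley's theorem by taking $U_\natural$ with $q(S')\cap U_\natural$ closed and regular in $U_\natural$ and then $U\subset S'\cap q^{-1}(U_\natural)$ with $q|_U$ smooth. After this correction, your translated data $S''=\chi^{-1}\cdot S'$ and $S_\natural=q(\chi)^{-1}\cdot\overline{q(S')}$ do satisfy the hypotheses of Proposition~\ref{prop:globallin}, and the conclusion follows as you state.
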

\begin{proof}
  By Chevalley's theorem~\cite[1.8.4]{EGAIV},  the image $q(S')$ is constructible in
  $\Mg^{\Pi_\natural}_{\QQp}$, so we can find an open subset $U_\natural\subset
  \Mg^{\Pi_\natural}_{\QQp}$ such that $q(S')\cap U_\natural$ is non-empty and closed in
  $U_\natural$. We can furthermore assume that  $q(S')\cap U_\natural$ is regular when
  endowed with the reduced subscheme structure. We let $S'_\natural$ be the closure of
  $q(S')\cap U_\natural$ in $\Mg^{\Pi_\natural}_{\QQp}$.
  Let
  $U\subset S'\cap q^{-1}(U_\natural)$ be a non-empty open subset such that $q|_U:U\to
  q(S')\cap U_\natural$ is smooth, when endowed with the reduced subscheme structure.

By Corollary~\ref{cor:globaltorsion} there exists a torsion point $\chi\in U$. After
replacing $\sigma$ by some power we can assume that $\sigma$ fixes $\chi$, so translation
by $\chi$ is $\sigma$-equivariant and we can replace $S$ by $\chi^{-1}\cdot S$ etc. Now $S'$
contains $1$ and by Lemma~\ref{lem:irrcomp}~2) for some $\rho\in (0,1)$ there
exists $m>0$ such that $\sigma^m$ stabilizes $\varphi^{-1}(S')\cap H^\pi_p(\rho)$. After
replacing $\sigma$ by $\sigma^m$ our sets $S'$ and $S'_\natural$ satisfy the assumptions of
Proposition~\ref{prop:globallin}, so $S'$ is a subtorus.
\end{proof}

\section{Subspaces of moduli spaces of rank one local systems} \label{sec:global}

Let $X$ be a reduced, separated scheme of finite type over a subfield $F$ of the field of complex numbers.  We denote by $\overline F$ the algebraic closure of $F$ in $\CC$.  Let $G={\rm Aut}(\overline F/F)$ be the Galois group of $F$.
Let $\Pi$ be the abelian fundamental group  $\pi_1^{\ab}(X(\CC ))$. The group $\Pi$ is
finitely generated and its pro-finite
completion $\pi$ is isomorphic to the \'etale fundamental group $\pi_1^{\et,\ab} (X_{\overline
  F})$ on which  $G$ acts continuously.

 We define a weight filtration $W_{-2}\Pi\subset W_{-1}\Pi \subset \Pi$, 
as follows. Choose a regular, open and dense subscheme $q:U\to X$ and an open embedding $U\to \bar U$ into a proper
smooth variety $\bar U$ over $F$  such that $\bar U\setminus U$ is a simple normal
crossings divisor. Then \[W_{-2} \Pi=   q_* \ker(\pi_1^{\ab}(U(\CC )) \to
  \pi_1^{\ab}(\bar U(\CC )) )\]
and $$W_{-1}\Pi = \im (q_*).$$

\begin{lemma}\label{lem:weight}
The  weight filtration on $\Pi_\QQ$ defined in \cite{Del74} is the same as the one defined above.   It is independent of the choice of $q$ and $U\to \bar U$ integrally. 
 In particular, the weights lie in $\{-2,-1\}$   if $X$ is
irreducible and geometrically unibranch. 
\end{lemma}

\begin{proof}
Dually, the mixed Hodge structure on $H^1(X(\mathbb C), \mathbb Q)$ is defined in (\cite[Prop.8.2.2]{Del74}) by descent
$H^1(X(\mathbb C), \mathbb Q)=H^1(X_\bullet(\mathbb C), \mathbb Q)$ where $X_\bullet\to X$ is a simplicial resolution, $X_0$.  Thus one has an exact sequence 
\ml{}{{\rm Ker} (H^0(X_1(\mathbb C), \mathbb Q) \to  H^0(X_2(\mathbb C), \mathbb Q))/{\rm Im} H^0(X_\bullet(\mathbb C), \mathbb Q) \to \\
H^1(X(\mathbb C), \mathbb Q) \to H^1(X_0(\mathbb C), \mathbb Q) \notag}
of mixed Hodge structures. 
By~\cite[Thm.8.2.4]{Del74}, the weights on the left are $0$, the ones on the right are in $\{1,2\}$. In particular the weights of $H^1(X(\mathbb C), \mathbb Q)$ lie in $\{1,2\}$ if and only if the map  $H^1(X(\mathbb C), \mathbb Q) \to H^1(X_0(\mathbb C), \mathbb Q) $ coming from the desingularization $X_0\to X$ is injective, or equivalently, as the restriction  $q$ factors through $U\to X_0$ and $
H^1(X_0(\mathbb C), \mathbb Q) \to H^1(U(\mathbb C), \mathbb Q)$ is injective, if and only if
the restriction $q^*: H^1(X(\mathbb C), \mathbb Q) \to H^1(U(\mathbb C), \mathbb Q) $  is
injective.  Finally, the weight $1$ part of $H^1(X(\mathbb C), \mathbb Q)$ is then its
subquotient $q^* H^1(X(\mathbb C), \mathbb Q)  \cap H^1(\bar U(\mathbb C), \mathbb Q)
\subset H^1(U(\mathbb C), \mathbb Q)$ and the weight $2$ part is the quotient ${\rm Im}
H^1(X(\mathbb C), \mathbb Q)  \subset H^1(U(\mathbb C), \mathbb Q)/H^1(\bar U(\mathbb C),
\mathbb Q).$ This proves the first part.

Since $\pi_1(\bar U(\CC )) $, and thus $\pi_1^{\ab}(\bar U(\CC ))$, is a birational invariant among smooth varieties, to prove the independence integrally for two opens $U, V$ we may assume that  $V\subset U \subset \bar U$ so that   $\bar U\setminus V$ is a normal crossings divisor.  Then ${\rm Ker}(\pi_1^{\ab}(V(\CC )) \to \pi_1^{\ab}(\bar U(\CC ))$ is the subgroup spanned by the local monodromies around the components of $\bar U\setminus V$, which is the union of the components of $\bar U\setminus U$ with the Zariski closure in $\bar U$ of the components of $U\setminus V$. But the subgroup in  $\pi_1^{\ab}(V(\CC ))$  spanned by the latter ones is 
${\rm Ker}(\pi_1^{\ab}(V(\CC )) \to \pi_1^{\ab}( U(\CC ))$. As the map $V\to X$ factors through $U$, this shows the independency of $W_{-2}\Pi$. Furthermore, as $\pi_1^{\ab}(V(\CC )) \to \pi_1^{\ab}( U(\CC ))$ is surjective, this shows the independency of $W_{-2}\Pi$.
Finally for the last part, we quote  \cite[Lem.~0BQI]{dJStack}. This finishes the proof. 
\end{proof}

 In the following we write $\Pi_\natural$ for $W_{-2} \Pi$.
 
For the rest of this section assume that $F$ is finitely generated and that $\Pi_\QQ$ has
weights in $\{-2,-1\}$.
By~\cite[Thm.~1, Thm.~2]{Del80},
$G$ acts purely of weight $-2$ on the pro-finite completion $\pi_\natural$ of
$\Pi_\natural$ and purely of weight $-1$ on $\pi/\pi_\natural$. As a consequence of this
weight filtration and using the Hodge-Tate property for the  Galois representation on $\pi^{(p)}_{\QQ_p}$ one obtains:

\begin{proposition}[Bogomolov, Litt]\label{prop:boli}
  If $\Pi_\QQ$ has
weights in $\{-2,-1\}$ then
for any prime number $p$ and any $\alpha\in \ZZ_p^\times$ sufficiently close to $1$ there
exists an element $\sigma\in G$ which  induces a semi-simple  map  on     $\pi^{(p)}_{\QQ_p} $
   which is multiplication by $\alpha^{-2}$ on  $
   \pi_{\natural,\QQ_p}^{(p)}$ and by $\alpha^{-1}$ on  $ (\pi/\pi_\natural)^{(p)}_{\QQ_p}$.
 \end{proposition}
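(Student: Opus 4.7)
The plan is to combine Deligne's weight theorem for Galois representations with Bogomolov's open image theorem for abelian varieties. First, by \cite{Del80}, $G$ preserves the filtration $\pi_{\natural,\QQ_p}^{(p)} \subset \pi^{(p)}_{\QQ_p}$, with the subspace pure of weight $-2$ and the quotient pure of weight $-1$. The weight $-2$ piece is Hodge--Tate of weight $1$ (geometrically it comes from the cocharacter lattice of the toric part of the boundary in a smooth compactification of $X$), so up to finite obstructions $G$ acts on it as a direct sum of copies of the cyclotomic character $\chi_{\mathrm{cyc}}$.

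Next, I would reduce the weight $-1$ quotient, up to finite index and isogeny, to the Tate module $V_p(A)$ of the Albanese variety of a smooth projective compactification of a dense open regular subset of $X$. Bogomolov's theorem \cite{Bog80} then provides that the image of $G$ in $\mathrm{GL}(V_p(A))$ contains an open neighborhood of the identity inside the subgroup of scalar matrices. The Weil pairing pins down the scaling: if $\sigma \in G$ acts as the scalar $\alpha^{-1}$ on $V_p(A)$, then $\chi_{\mathrm{cyc}}(\sigma) = \alpha^{-2}$, matching precisely the required scalar on the weight $-2$ piece. Thus for every $\alpha \in \ZZ_p^\times$ sufficiently close to $1$ one finds a single $\sigma \in G$ acting as $\alpha^{-1}$ on the weight $-1$ graded piece and simultaneously as $\alpha^{-2}$ on the weight $-2$ graded piece. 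Semi-simplicity of the action on the total space $\pi^{(p)}_{\QQ_p}$ is then automatic, because for $\alpha \neq 1$ close to $1$ the two scalars $\alpha^{-1}$ and $\alpha^{-2}$ are distinct, so any potential extension class between the two graded pieces is split by the $\sigma$-eigenspace decomposition.

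The main obstacle I anticipate is the geometric reduction of the weight $-1$ piece to $V_p$ of an honest abelian variety over $F$ with the Galois action tracked faithfully; this requires the description of the weight filtration via $W_{-1}\Pi = \mathrm{im}(H_1(U(\CC),\ZZ) \to H_1(X(\CC),\ZZ))$ for a suitable smooth dense open $U$, the identification of $\pi_1^{\ab}$ of a smooth projective variety with the Tate module of its Albanese, and verifying that the cyclotomic action on the weight $-2$ piece really is scalar (rather than only scalar on a finite-index subgroup). Once these three geometric ingredients are in place, the combination of Bogomolov's theorem and the Weil pairing yields the desired $\sigma$ for every $\alpha$ in an open neighborhood of $1$ in $\ZZ_p^\times$.
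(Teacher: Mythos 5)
Your outline is correct, and in substance it reconstructs the proofs of the results that the paper simply cites: the paper's own argument for Proposition~\ref{prop:boli} consists of quoting \cite[Cor.~1]{Bog80} for $X$ smooth and proper and \cite[Lem.~2.10]{Lit18} for $X$ smooth, together with the one-line reduction that $\pi^{(p)}_{\QQ_p}$ is a $G$-equivariant quotient of $\pi_1^{\et,\ab}(X^{\rm reg}_{\overline F})^{(p)}_{\QQ_p}$ --- and that quotient statement is exactly where the hypothesis that $\Pi_\QQ$ has weights in $\{-2,-1\}$ enters (surjectivity of $H_1(U(\CC),\QQ)\to H_1(X(\CC),\QQ)$ for $U=X^{\rm reg}$, Lemma~\ref{lem:weight}). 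In your write-up this reduction is only implicit, so make it explicit: both the scalar actions on the two graded pieces and semi-simplicity descend to a $G$-equivariant quotient carrying the image of the weight filtration, and this is the only use of the weight hypothesis. Your treatment of the smooth case is essentially Litt's: Bogomolov's homothety theorem applied to $V_p$ of the Albanese of a smooth compactification $\bar U$ produces, for every $\alpha$ close to $1$, a $\sigma$ acting as the scalar $\alpha^{-1}$ on the weight $-1$ quotient; a polarization turns the Weil pairing into a $G$-equivariant pairing $V_p\times V_p\to \QQ_p(1)$, forcing $\chi_{\mathrm{cyc}}(\sigma)=\alpha^{-2}$; and $\ker\bigl(H_1(U)\to H_1(\bar U)\bigr)$ is a quotient of $\oplus_i\ZZ_p(1)$ indexed by boundary components, so after a finite extension $F'$ rationalizing these components $\sigma$ acts on it by $\chi_{\mathrm{cyc}}(\sigma)$. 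The ``finite-index'' worry you raise is harmless: apply Bogomolov over $F'$; elements of the open subgroup $\mathrm{Aut}(\overline F/F')$ are still elements of $G$, and the homotheties obtained still fill a neighbourhood of $1$. Two small cautions: the inference ``Hodge--Tate of weight one, hence cyclotomic up to finite obstructions'' is not valid as a formal implication --- it is the geometric description of $W_{-2}$ by boundary classes (the toric part of the semi-abelian Albanese), which you mention parenthetically, that gives the cyclotomic action; and since $F$ is finitely generated but need not be a number field, one should either invoke Bogomolov in that generality or specialize to a closed point of a model, whose Galois image is contained in that of $G$ (the paper's citation hides the same point). Your semi-simplicity argument is fine: an operator preserving the filtration and acting by the distinct scalars $\alpha^{-2}\neq\alpha^{-1}$ (for $\alpha\neq 1$) on sub and quotient is killed by a separable quadratic polynomial, hence is semi-simple.
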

The proposition is shown in \cite[Cor.~1]{Bog80} for $X$ smooth and proper, and in
\cite[Lem.~2.10]{Lit18} for $X$ smooth, which implies it in general, as   $\pi^{(p)}_{\QQ_p}$ is a
$G$-equivariant
quotient of $\pi^{\et , \ab}_1(X^{\rm reg}_{\overline F})^{(p)}_{\QQ_p} $.

\begin{proof}[Proof of Theorem~\ref{thm:main} 1)]
If we choose  $\alpha \in \ZZ^\times_p$ in Proposition~\ref{prop:boli}  not to be a  root of unity, the
resulting automorphism $\sigma:\pi\xrightarrow{\simeq}\pi$ satisfies the conditions of
Data~\ref{data:data}, so part 1) of Theorem~\ref{thm:main} is a direct consequence of Theorem~\ref{thm:critsubtorus}.
\end{proof}

\section{On subgroup schemes of semi-abelian varieties} \label{sec:ab}

 Let $F\subset \CC$ be a finitely generated field. Let $\overline F$ be its algebraic closure in $\CC$, and  $G=\mathrm{Gal}(\overline F / F)$ be its Galois group.
Let $M/F$ be a semi-abelian variety. By definition,  $M$ is an extension of an abelian variety $A$ by a
torus $T$. Let $V^M=H_1(M(\CC),\QQ )$ and let
$V_p^M = H^1_\et(M_{\overline F},\QQ_p)^\vee  $ be the dual of   $p$-adic \'etale cohomology, endowed with
its canonical
$G$-action. There is a canonical  comparison  isomorphism  $$\psi^M:V^M\otimes_\QQ
\QQ_p\xrightarrow{\simeq} V_p^M.$$

\begin{proposition}\label{prop:constate}
For a $\QQ$-linear subspace $U\subset V^M$ such that $\psi^M(U_{\QQ_p})\subset V_p^M$ is stabilized by
$G$,  there exists a unique semi-abelian subvariety $N\hookrightarrow M$  defined over $F$ such that $U=V^N$.
\end{proposition}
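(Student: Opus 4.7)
The plan is to use Faltings' theorem (the Tate conjecture) on abelian varieties over finitely generated fields, together with an elementary analog for tori and a gluing argument via a Tate-type statement for extensions of abelian varieties by tori. Uniqueness is immediate: if $N_1, N_2 \hookrightarrow M$ over $F$ both satisfy $V^{N_i} = U$, then $N_1(\CC)$ and $N_2(\CC)$ coincide as connected complex Lie subgroups of $M(\CC)$ (they are determined by their first integral homology viewed as a sublattice of $V^M \otimes \RR$), hence $N_1 = N_2$ as $F$-subschemes.

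For existence, I would reduce to the abelian and toric cases using the canonical extension $0 \to T \to M \to A \to 0$ over $F$, which induces $0 \to V^T \to V^M \to V^A \to 0$ compatibly with the $G$-actions in the $p$-adic realization. Set $U_T := U \cap V^T$ and let $U_A \subset V^A$ be the image of $U$; both inherit the Galois-stability after tensoring with $\QQ_p$. For the torus $T$, the $G$-action on $V^T = X_*(T) \otimes \QQ$ factors through a finite quotient, so $U_T \otimes \QQ_p$ being $G$-stable is equivalent to $U_T$ being $G$-stable, which produces a unique subtorus $T' \subset T$ over $F$ with $V^{T'} = U_T$. For the abelian variety $A$, Faltings gives $\End_F(A) \otimes \QQ_p \cong \End_G(V_p^A)$ with $V_p^A$ semisimple as a $G$-module; combined with Poincar\'e complete reducibility this yields a unique sub-abelian variety $A' \subset A$ over $F$ such that $V^{A'} \otimes \QQ_p = U_A \otimes \QQ_p$ inside $V_p^A$, and hence $V^{A'} = U_A$ as $\QQ$-subspaces of $V^A$.

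To glue these two pieces, consider $M_1 := M \times_A A' \subset M$, which fits in $0 \to T \to M_1 \to A' \to 0$, and its quotient $\bar M := M_1 / T'$ over $F$, an extension $0 \to T/T' \to \bar M \to A' \to 0$. The image of $U$ in $V^{\bar M}$ defines a $\QQ$-splitting of the corresponding sequence $0 \to V^{T/T'} \to V^{\bar M} \to V^{A'} \to 0$ (since $U \cap V^T = U_T$ and $U/U_T \xrightarrow{\sim} U_A = V^{A'}$), and by hypothesis this splitting is $G$-equivariant after tensoring with $\QQ_p$. By the Tate conjecture for extensions of abelian varieties by tori---equivalently, the injectivity of $\mathrm{Ext}^1_F(A', T/T') \otimes \QQ_p \hookrightarrow \mathrm{Ext}^1_G(V_p^{A'}, V_p^{T/T'})$, which reduces to Faltings applied to the dual abelian variety $\hat A'$---this forces the extension $\bar M \to A'$ to split algebraically over $F$. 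Pulling back the resulting embedding $A' \hookrightarrow \bar M$ along $M_1 \to \bar M$ yields the desired sub-semi-abelian variety $N \subset M$ over $F$ with $V^N = U$.

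The main obstacle is the passage from the $G$-stable $\QQ_p$-structure back to an algebraic object over $F$: this occurs twice (in the abelian case and in the gluing step), and both instances rely essentially on Faltings' theorem applied to the relevant abelian variety (namely $A$ and $\hat A'$). All remaining steps amount to routine linear algebra and bookkeeping with the canonical extension of $M$ by its maximal subtorus.
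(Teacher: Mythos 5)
Your overall strategy (handle the toric part, handle the abelian quotient via Faltings, then glue along $0\to T\to M\to A\to 0$) parallels the paper's three-step proof, and your uniqueness argument and torus case are fine. The genuine gap is in the abelian step, which is exactly where the content sits: from Faltings (semisimplicity of $V_p^A$ and $\End_F(A)_{\QQ_p}\simeq\End_G(V_p^A)$) together with Poincar\'e reducibility you cannot conclude that a $G$-stable $\QQ_p$-subspace of $V_p^A$ is of the form $V_p^{A'}$ --- that statement is false in general. For instance, for $A=E\times E$ with $E$ a non-CM elliptic curve over $\QQ$, the graph of $\lambda\cdot\mathrm{id}$ on $V_p^E$ with $\lambda\in\QQ_p\setminus\QQ$ is $G$-stable but is not $V_p$ of any abelian subvariety. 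What rescues the claim is precisely that your subspace is $U_A\otimes\QQ_p$ for a $\QQ$-rational $U_A\subset V^A$, and this must be used: Faltings only gives an idempotent $e'\in\End_F(A)_{\QQ_p}$ with image $U_{A,\QQ_p}$, and one still needs a descent producing $e\in\End_F(A)_{\QQ}$ with $e(V^A)=U_A$; this is the paper's Lemma~\ref{lem:semisimp} (pass to a finite extension and take the trace of $e'$, exploiting that $U_A$ is defined over $\QQ$). As written, your abelian step silently assumes this.

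Two further points in the gluing step. First, the injectivity of $\mathrm{Ext}^1_F(A',T/T')\otimes\QQ_p\to \mathrm{Ext}^1_G(V_p^{A'},V_p^{T/T'})$ is not Faltings applied to $\hat A'$: via Barsotti--Weil it amounts to injectivity of the Kummer map $\hat A'(F)\otimes\QQ_p\to H^1(G,V_p^{\hat A'})$, which rests on finite generation of $\hat A'(F)$ (Mordell--Weil/Lang--N\'eron over finitely generated fields) --- the same input the paper uses to extend \eqref{eq.faltiso} to semi-abelian varieties; also, injectivity after $\otimes\QQ_p$ only shows the class of $\bar M$ is torsion, hence a splitting up to isogeny, which still suffices but should be said. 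Second, and more seriously, an algebraic section over $F$ does not by itself finish the argument: its homology gives \emph{some} $\QQ$-splitting of $0\to V^{T/T'}\to V^{\bar M}\to V^{A'}\to 0$, and you must show it coincides with your prescribed splitting $\bar U$, otherwise the pulled-back $N$ need not satisfy $V^N=U$. This is repairable --- $G$-equivariant $\QQ_p$-splittings are unique because $\Hom_G(V_p^{A'},V_p^{T/T'})=0$, again by finite generation of $\hat A'(F)$ --- but that step is missing. The paper's Case~3 avoids the $\mathrm{Ext}$-computation altogether: it produces $s\in\Hom_F(A'',M)_{\QQ_p}$ with prescribed \'etale realization via \eqref{eq.faltiso} (extended to semi-abelian $M$), shows $s$ is $\QQ$-rational because its realization is, and takes $N$ to be the connected kernel of $M\to\mathrm{coker}(s)$.
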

 Before starting the proof, let us state a small lemma.

\begin{lemma}\label{lem:semisimp}
  Let $V$ be a finite dimensional vector space over a field $K$ of characteristic $0$,  and let $K\subset K'$ be a field
  extension. Let $C\subset \End_K(V)$ be a  $K$-subalgebra and let $U\subset V$ be a
  $K$-linear subspace. If there exists an idempotent $e'\in C_{K'}$ with
  $e'(V_{K'})=U_{K'}$ then there exists an  $e\in C$ with $e(V)=U$.
\end{lemma}

\begin{proof}
Using a spreading argument one reduces to a finite field extension $K\subset K'$. Then
$e=\tr_{K'/K}(e')\in C$ is the requested element.
\end{proof}

\begin{proof}[Proof of Proposition~\ref{prop:constate}]
   Uniqueness is clear as the tangent space of $N$ at the neutral element, which is
   identified with $U_\CC$, uniquely characterizes $N$.

   Recall that Faltings \cite[Thm.~1]{Fal86}
showed that $V_p^A$ is a semi-simple $G$-representation and that for semi-abelian
varieties $M$, $ M'$ we have an isomorphism
\eq{eq.faltiso}{
V_p:\Hom_k(M,M')_{\QQ_p} \xrightarrow{\simeq} \Hom_G(V_p^M, V_p^{M'})  .
}
In fact Faltings showed the latter for abelian varieties only, but it can be extended to
semi-abelian varieties  using the Mordell-Weil theorem, see \cite[Thm.~4.3]{Jan95}.

We show the existence of $N$ in three steps.

\smallskip

  {\em  Case 1}: $M=T$ torus.\\
  In this case we have $V^M=\Hom(X(T_{\overline F}),\QQ)$ , where $X(T_{\overline F})$ is the group
  of characters, so $U$ induces by duality a free abelian quotient $Y$ of  $X(T_{\overline
    F})$.  As the induced $p$-adic quotient $ X(T_{\overline F})\otimes_\ZZ \ZZ_p \to Y\otimes_\ZZ \ZZ_p$  is stabilized by $G$, so is the quotient $X(T)\to Y$.
   We set $N_{\overline F}=\Spec(\overline F [Y])$ and
  observe that the $G$-action
   descends  this torus to the requested torus $N/ \overline F$.

\smallskip

  {\em  Case 2}: $M=A$ abelian variety.\\
 Note  that the Betti realization
$$V:  \End_k(A)_{\QQ}\to  \End(V^A) $$
 identifies $\End_k(A)_\QQ$ with a
semi-simple subalgebra of $\End(V^A)$.

Falting's results recalled above imply that  there exists an idempotent $e'\in \End_k(A)_{\QQ_p}$ such
$V_p(e')(V_p^A)=\psi^A(U_{\QQ_p})$. By Lemma~\ref{lem:semisimp} there is an
$e\in \End_k(A)_{\QQ}$ with $V(e)(V^A)=U$. Write $e=f/n$ with $f\in \End_F(A)$
and $n\in \mathbb N$. We define $N$ to be
$\im(f)\subset A$.

\smallskip

  {\em  Case 3}: $M$ semi-abelian variety with torus part $T$ and abelian quotient $A$.\\
We  construct a semi-abelian  quotient variety $M'$ of $M$ such that $N$ is the
connected component of $\ker(M\to M')$.
By Case~1 the subspace $U\cap V^T$ comes from a subtorus $T^\flat$ of $T$. Replacing $M$
by $M/T^\flat$ we may assume without loss of generality that $U\cap V^T=0$, so we get an
isomorphism $\tau:U \xrightarrow{\simeq} \tau(U)\subset V^A$, which is $G$-invariant after
tensoring with $\QQ_p$.  By Case~2
there is an abelian subvariety  $ t: A''\hookrightarrow A$
 such that $V(t)(V^{A''})=\tau(U)$. In view of the isomorphism~\eqref{eq.faltiso} there exists a map $s\in
 \Hom_F(A'',M)_{\QQ_p}$ with \'etale realization $\tau^{-1}_{\QQ_p} \circ
 V_p(t)$. We obtain a commutative diagram
 \[
   \xymatrix{
 & & A'' \ar[d]^t \ar@{-->}[ld]_s \\
 T\ar[r] & M\ar[r] & A
}
\]
where the dotted arrow means that $s$ has $\QQ_p$-coefficients.
As the realization of $s$ is induced by the $\QQ$-linear map  $\tau^{-1} \circ V(t)$, so
is $s\in  \Hom_F(A'',M)_{\QQ}$.
So $M'=\mathrm{coker}(s)$ is the requested semi-abelian
quotient of $M$.

\end{proof}

Theorem~\ref{thm:main}~2) follows immediately from the following proposition. 
  Let $Y$ be a reduced, separated scheme of finite type over $\CC$ and define $\Pi=\pi_1^{\ab}(Y(\CC ))$.
\begin{proposition} \label{prop:mot}
If $\Pi_\QQ$ has weights in $\{-2,-1\}$ and $T\subset \Mg^\Pi_{\QQp}$ is a
subtorus the following are equivalent:
\begin{itemize}
\item[1)] $T$ is a motivic subtorus,
  \item[2)] there exists a morphism $\psi_T:Y^{\rm reg}\to B$ to a
   complex semi-abelian variety $B$  such that
      \ga{}{  \tau^*(T)={\rm im}\left( \psi_T^*: \Mg^{\Gamma}_{\overline \QQ_p} \to \Mg^{\Pi^\circ}_{\overline \QQ_p} \right) \subset \Mg^{\Pi^\circ}_{\overline \QQ_p}  \notag}
where $\Gamma =\pi_1^{\rm ab}(B(\CC))$ and $\tau: \Pi^\circ= \pi_1^{ \ab }(Y^{\rm
   reg}(\CC) ) \to \Pi $ is induced by the open embedding $Y^{\rm reg} \hookrightarrow Y$ of the regular locus.
\item[3)]  $Y=X\otimes_F \CC$ for a scheme $X$ of finite type over a finitely generated
  field $F\subset \CC$ such that
  $\varphi^{-1}(T)$ is stabilized by $\mathrm{Gal}(\overline F / F)$, where $\varphi$ is
  as defined in Section~\ref{sec:torsion}. 
\end{itemize}

\end{proposition}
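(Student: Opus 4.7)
The plan is to establish the cycle $1)\Rightarrow 2)\Rightarrow 3)\Rightarrow 1)$, with the arithmetic Tate-conjecture input of Proposition~\ref{prop:constate} entering only in the final step.

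For $1)\Rightarrow 2)$, I would precompose the torsion-free MHS quotient $\Pi\twoheadrightarrow\Pi''$ underlying $T$ with $\tau$ to obtain a torsion-free MHS quotient $\Pi^\circ_\QQ\twoheadrightarrow\Pi''_\QQ$ of weights in $\{-2,-1\}$. Since $Y^{\rm reg}$ is smooth, Serre's generalized Albanese $A=\mathrm{Alb}(Y^{\rm reg})$ is a semi-abelian variety with $H_1(A(\CC),\QQ)\cong \Pi^\circ_\QQ$ as a mixed Hodge structure, and $Y^{\rm reg}\to A$ is universal for morphisms to semi-abelian varieties. Deligne's equivalence between torsion-free MHS of weights $\{-2,-1\}$ and complex semi-abelian varieties (1-motives with trivial lattice part) then realizes $\Pi''_\QQ$ as $H_1$ of a quotient semi-abelian variety $B$ of $A$; composing the Albanese morphism with $A\to B$ yields the desired $\psi_T$.

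For $2)\Rightarrow 3)$, I would spread $\psi_T$ and $B$ out over a finitely generated subfield $F\subset\CC$ so that $X^{\rm reg}\to B$ is defined over $F$. The induced map $\pi\to\pi_1^{\et,\ab}(B_{\overline F})$ is then $G$-equivariant, and $\varphi^{-1}(T)$, which is precisely the set of continuous characters of $\pi$ factoring through this map, is stabilized by $G$.

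The main obstacle is $3)\Rightarrow 1)$. Writing $T=\Mg^{\Pi''}$ for a torsion-free quotient $\Pi\twoheadrightarrow\Pi''$, I would first argue that Galois stability of $\varphi^{-1}(T)$ forces the induced profinite surjection $\pi\twoheadrightarrow\widehat{\Pi''}$ to be $G$-equivariant, using that a finitely generated profinite abelian group has enough continuous characters into $\QQp^\times$ to separate its closed subgroups. Taking $M/F$ to be the generalized Albanese of a model $X^{\rm reg}$ over a finitely generated $F$ stabilizing $\varphi^{-1}(T)$, smoothness yields a MHS identification $V^M\cong\Pi^\circ_\QQ/\mathrm{tors}$, and the Betti-\'etale comparison $\psi^M$ identifies the $\QQ$-linear subspace $U=\ker(V^M\to\Pi''_\QQ)$ with a $G$-stable $\QQ_p$-subspace of $V_p^M$ after tensoring with $\QQ_p$. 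Proposition~\ref{prop:constate} then produces a semi-abelian subvariety $N\hookrightarrow M$ over $F$ with $V^N=U$, and the quotient $M/N$ endows $\Pi''_\QQ$ with a MHS relative to which $V^M\twoheadrightarrow V^{M/N}=\Pi''_\QQ$ is a MHS quotient. Combined with the surjective MHS map $\Pi^\circ_\QQ\twoheadrightarrow\Pi_\QQ$ coming from the weight hypothesis, this exhibits $\Pi_\QQ\twoheadrightarrow\Pi''_\QQ$ as a MHS quotient, so $T$ is motivic. The trickiest point is this final MHS matching: verifying that the MHS on $\Pi''_\QQ$ induced from $M/N$ is compatible with its being a quotient of $\Pi_\QQ$ rather than merely of $\Pi^\circ_\QQ$.
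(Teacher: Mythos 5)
Your proposal is correct and follows essentially the same route as the paper: the cycle $1)\Rightarrow 2)\Rightarrow 3)\Rightarrow 1)$, with Serre's Albanese plus Deligne's correspondence between complex semi-abelian varieties and torsion-free polarizable mixed Hodge structures of weights $\{-2,-1\}$ for $1)\Rightarrow 2)$, spreading out over a finitely generated field for $2)\Rightarrow 3)$, and the comparison isomorphism together with Proposition~\ref{prop:constate} applied to the Albanese for $3)\Rightarrow 1)$. The only difference is organizational: the paper invokes the weight hypothesis via Lemma~\ref{lem:weight} at the outset to reduce to $Y$ regular and connected, which settles in advance the ``MHS matching'' along the strict surjection $\Pi^\circ_\QQ\twoheadrightarrow\Pi_\QQ$ that you defer to the final step.
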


\begin{proof}
 Applying Lemma~\ref{lem:weight} and noticing that if $U$ is regular, the weights of 
$H_1(U(\CC),\QQ)$ are in $\{-2,-1\}$, we may assume that $Y$ is regular and connected.

We prove 1) implies 2). 
A quotient Hodge structure of a polarized Hodge
 structure inherits the polarization and by~\cite[Const.~10.1.3]{Del74},  we know that polarizable integral Hodge
 structures of weight $\{-2,-1\}$ correspond to complex semi-abelian varieties.

 We consider
 Serre's Albanese morphism $ {\rm alb}: Y\to M$ to a complex semi-abelian variety, defined
 once we have chosen a rational point $x_0\in Y(\CC )$ such that ${\rm alb}(x_0)=0$,
 see~\cite[Thm.~4]{Ser58}. We know that  ${\rm alb }: H_1(Y(\CC) ,\QQ) \to
 H_1(M(\CC ),\QQ)$
 is an isomorphism~\cite[Ann.~II]{Ser58}. By 1) our torus $T$ is induced by a torsion free
  quotient of $\Pi$ compatible with the Hodge structure, so it corresponds to a quotient
  semi-abelian variety  $B$ of $M$ and we define $\psi_T$ as the composite map
  $Y\xrightarrow{\rm alb} M \to B$. 
 
 We prove 2) implies 3).  The morphism $\psi_T$ is defined over a finitely generated
 subfield $F\subset \CC$. Therefore $\varphi^{-1}(T)$ is stabilized by ${\rm
   Gal}(\overline{ F} / F)$.
 
 We prove 3) implies 1). Let $M$ be the Albanese of $Y$ as above, which also descends to $F$. The torus $T$ induces a
 $\QQ$-linear subspace $U\subset V^M\simeq H_1(Y(\CC) ,\QQ )$. As $\varphi^{-1}(T)$ is
 stabilized by ${\rm Gal}(\overline F /F )$ so is $\psi^M(U_{\QQ_p} )$. From
 Proposition~\ref{prop:constate} we deduce that $U$ is a Hodge substructure of $V^M$, so the torus $T$ is motivic.
 \end{proof}

From Theorem~\ref{thm:main} one also obtains the following compatiblility result with
automorphisms of $\CC$. The notation is as in Theorem~\ref{thm:main}.

\begin{remark} \label{rmk:tak}
 For a finitely generated subfield $F\subset \CC$ and  for $\sigma\in
\mathrm{Aut}(\CC)$ let $F^\sigma=\sigma(F)$. For a scheme $X$ separated and of finite type
over $F$ consider the $F^\sigma$-scheme $X^\sigma=X\otimes_\sigma F^\sigma $ with abelian
 fundamental group $\Pi^\sigma=\pi_1^{\ab}( X^\sigma(\CC ) )$, the pro-finite completion of which we denote by
 $\pi^\sigma$. Then $\sigma$ induces a continuous isomorphism $\pi\xrightarrow{\simeq} \pi^\sigma$.
  Let $\mathcal S\subset \Hom_\cont (\pi, \QQp^\times )$ be a  subset stabilized by an open
  subgroup of $G$ such that \eq{eq.autoc}{\varphi^{-1}( S )=\mathcal S \ {\rm where} \  S=\overline{\varphi(\mathcal S)}. \notag}
  Then $\mathcal S^\sigma=\sigma (\mathcal S)\subset  \Hom_\cont (\pi^\sigma, \QQp^\times )$ satisfies
  \eq{eq.autoc}{(\varphi^\sigma)^{-1}( S^\sigma )=\mathcal S^\sigma     \ {\rm where} \  S^\sigma=\overline{\varphi(\mathcal S^\sigma)}\notag} 
  and  $ \varphi^\sigma:  \Hom_\cont (\pi^\sigma, \QQp^\times )\to   \Mg^{\Pi^\sigma}_{\QQp}$ is the map
  analog to  $\varphi$.   Indeed, up to translation by a torsion character, the
  irreducible components of  $S^\sigma$ are the subtori 
  of $\Mg^{\Pi^\sigma}_{\QQp}$ induced by
  the maps $ \psi_{T}\otimes_{\sigma} \CC: X^{\rm reg} \otimes_\sigma \CC \to B\otimes_\sigma \CC$ of complex varieties, where $\psi_T$ is defined in Theorem~\ref{thm:main} 2). 
\end{remark}

\section{ Application to jumping loci  } \label{sec:jump}

Let the notation be as in Theorem~\ref{thm:jump}.
Denoting by $$\mathcal{L}: \Pi\to \CC[\Pi]^\times $$ the canonical $\CC[\Pi]$-valued character, $R\Gamma( X(\CC),\sF\otimes \mathcal{L})$
  is quasi-isomorphic to a
  bounded above complex $P$ of finitely generated free $\CC[\Pi]$-modules, see~
  \cite[Lemma~V.5,~10.13]{Bor84}. Moreover, for $L\in \Mg^\Pi_\CC(\CC)$ we get a
  quasi-isomorphism $R\Gamma (X(\CC) , \sF\otimes L)\simeq P\otimes_{\CC[\Pi],L} \mathbb C$.
  So by~\cite[Thm.~7.6.9]{EGAIII} the sets $\Sigma^i(\sF , j)$ are Zariski closed in $\Mg^\Pi_{\CC}(\CC )$
  for any $i,j\ge 0$.

  \begin{proof}[Proof of Theorem~\ref{thm:jump}]
 As $\sF$ descends to $D^b_c(X(\CC ) , K)$ for some number field $K\subset \CC$   see Definition \ref{defn:arithm})
 we see that
 $\Sigma^i(\sF , j)$ is defined by  a Zariski closed subset in $\Mg^\Pi_{K}$, which we
 denote by the same symbol. Note that  for  all but finitely
 many embeddings  $K\hookrightarrow \QQp$ the closure of each irreducible component of $S=\Sigma^i(\sF ,
 j)_{\QQp}$ in $\Mg^\Pi_{\ZZp}$ has non-empty special fibre.
 Choose  an embedding
 $K\hookrightarrow \QQp$ with this property   such that furthermore $\sF$ induces an arithmetic  $\sF_\et
 $ in $ D^b_c (X_{\CC,\et}, \QQp)$. After replacing $F$ by a finitely generated extension we
 can assume that each $\sigma \in \mathrm{Aut} (\CC /F)$ fixes $\sF_\et$ up to quasi-isomorphism.

 Clearly, $\varphi^{-1}(S)$ consists of those $L_\et \in \Hom_{\cont}(\pi,\QQp^\times)$,
 such that
 \[
\dim H^i(X_{\CC, \et},\sF_\et \otimes L_\et )> j
\]
and  each $\sigma \in \mathrm{Aut} (\CC /F)$ stabilizes this subset. So by
Theorem~\ref{thm:main} we observe that each irreducible component of $S$ is a torsion
translated motivic subtorus of $\Mg^\Pi_{\QQp}$, so the same is true for $\Sigma^i(\sF ,
j)$ over $\CC$.
\end{proof}

\section{Some remarks}\label{sec:rmks}

 Let $X$ be a separated scheme, which is geometrically connected and of finite type   over a finitely generated field
$F\subset \CC$. Let $G={\rm Aut}(\overline F/F)$ be the Galois group of $F$, where
$ \overline F$ is the algebraic closure of $F$ in $\CC$. Let $x_{0}\in X(F)$ be a fixed
$F$-rational point.
 Let $\Pi$ be the fundamental group $ \pi_1(X(\CC),x_0)$ and let $\pi$ be its pro-finite
 completion, which is isomorphic to $\pi_1^{\et}(X_{\overline F} , x_0 )$ and which is
 endowed with a continuous action of $G$.

 Let $K$ be a field. Let $\mathrm M_K^{\rm irr}$ be the moduli space of isomorphism classes of
 $K$-linear irreducible local systems  of rank $r$ on $X(\CC )$. For a prime number $p$ we obtain a map
 \[
\varphi: \Hom_\cont (\pi, {\rm GL}_r(\QQp ))^{\rm irr} \to \mathrm M^{\rm irr}_{\QQp}
 \]
from the irreducible continuous representations to the closed points of the moduli space.
Through its action on $\pi$ we have an induced action of $G$ on the domain of $\varphi$.
For $p'$ another prime number we obtain an analogous map $\varphi'$.
Let $\iota : \QQp \xrightarrow{\simeq} \overline\QQ_{p'}$ be an isomorphism of fields.

As a potential generalization of Corollary~\ref{cor:indp} we suggest the following problem.

\begin{question}\label{quest.inde}
Let $S$ be a closed subset of $\mathrm M^{\rm irr}_{\QQp}$ such that $\varphi^{-1}(S)$ is
stabilized by an open subgroup of $G$. Let $S'$ be an irreducible component of $S$ such that
$\varphi^{-1}(S')$ is non-empty.
\begin{itemize}
  \item[1)] Is the set of arithmetic points $s\in S'$ dense on $S'$? Recall that a closed
      point $s\in  \mathrm M^{\rm irr}_{\QQp}$ is called {\em arithmetic}  if $\varphi^{-1}(s)$
      is non-empty and stabilized by an open subgroup of $G$.  
\item[2)]
 Is $(\varphi')^{-1}(\iota (S'))$ \\
 a)  non-empty;\\
b) stabilized by an open subgroup of $G$?
\item[3)] In case $X$ is smooth and projective over $F$,
  is $\mathrm{RH}(S'(\CC ) ) $ a Zariski closed subset of the moduli space of irreducible rank $r$
  integrable connections, where $\rm RH$ is the Riemann-Hilbert correspondence?
\end{itemize}
\end{question}
Question~\ref{quest.inde}~1) was suggested to us by M.~Groechenig.
One can show (see  \cite[Section~3]{EG18}) that if $S$ consists of a single isolated point,  then a positive answer to
Question~\ref{quest.inde}~2) implies Simpson's integrality
conjecture~\cite[Conj.~5]{Sim90}.                 We also observe that the converse of 1),
  which would be a Andr\'e-Oort type question, has a negative answer: it is not the case
  that the Zariski closure of a set of torsion points is the union of motivic subtori
  translated by torsion points. For example, let $X$ be an elliptic curve defined over
  $F=\mathbb{Q}$,  so    the two-dimensional Hodge structure $H_1$ is irreducible. Then  $\Mg^\Pi_A$ is a two-dimensional torus, which contains one-dimensional subtori, which are thus non-motivic. 
However the set of torsion points  on such subtori is dense.

\section{An alternative approach }\label{sec.alter}

In this section we sketch an alternative  proof of a weaker version of
Theorem~\ref{thm:main},
which is however sufficient for our application to jumping loci in Theorem~\ref{thm:jump}.
This alternative proof is  short and does not use $p$-adic analysis, instead it is based
on  class field theory, more precisely the rank one   case of de Jong's conjecture~\cite{deJ01},
and the torus case of the Mordell-Lang conjecture~\cite{Lau84}. A similar technique is used by
Drinfeld in~\cite{Dr01}.

Let the notation be as in Section~\ref{sec:global}, i.e.\ $F\subset \mathbb C$ is a
finitely generated field, $X/F$ is a reduced separated scheme of finite type,
$\Pi=\pi_1^{\rm ab}(X(\CC))$, $\pi$ is the pro-finite completion of $\Pi$ and $\pi^{(p)}$
denotes its pro-$p$ completion.

Let $K$ be a number field. For each embedding $\iota:K\hookrightarrow \QQp$ we define a map
\[
 \varphi_\iota: {\rm Hom}_{\rm cont} ( \pi, \overline  \QQ^\times_p) \to  \Mg^\Pi_{K}
\]
as in Section~\ref{sec:torsion}.

Let $S\subset\Mg^\Pi_{K}$ be a closed subset. 
In the following we sketch an alternative argument for the following corollary of Theorem~\ref{thm:main}.
\begin{corollary}\label{car:altproof}
  Assume that $\Pi_\QQ$ has weights in $\{-2,-1\}$ and that for infinitely many embeddings $\iota:K\hookrightarrow \QQp$ there exist open subgroups
of $G$ which stabilize
$\varphi_\iota^{-1}(S)$.   Then $S_{\overline K}\subset  \Mg^\Pi_{\overline K}$ is a finite union of subtori
translated by torsion points.
\end{corollary}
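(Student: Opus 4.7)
The plan is to combine the rank one case of de Jong's conjecture with the Mordell--Lang conjecture for tori, the latter proved by Laurent~\cite{Lau84}: any closed subvariety of a torus over a field of characteristic zero whose torsion points are Zariski dense is a finite union of subtori translated by torsion characters. Thus it will suffice to exhibit a Zariski dense set of torsion points on each irreducible component of $S_{\bar K}$.

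First I would apply Lemma~\ref{lem:irrcomp}~2 to reduce to the case where $S$ is irreducible over $K$ and each $\varphi_\iota^{-1}(S)$ is stable under an open subgroup $G_\iota \subset G$. Among the infinitely many such embeddings I would then fix one $\iota$ with $\varphi_\iota^{-1}(S)\ne \varnothing$: any $\bar K$-point $\chi_0$ of $S$ takes values in some number field $L$, and for all but finitely many extensions $L\hookrightarrow \QQp$ the character $\chi_0$ is $\iota$-integral and so lies in $\varphi_\iota^{-1}(S)$. By Lemma~\ref{lem:irrcomp}~3 the image $\varphi_\iota(\varphi_\iota^{-1}(S))$ is then Zariski dense in $S_{\QQp}$.

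The main step will be to show that every point $\varphi_\iota(\chi)$ for $\chi\in \varphi_\iota^{-1}(S)$ is a torsion point of $\Mg^\Pi_{\bar K}$. Following Drinfeld's strategy in~\cite{Dr01}, I would spread $X$ to a smooth model $\mathcal X \to \Spec R$ over a finitely generated subring $R \subset F$, so that by Chebotarev the Frobenius conjugacy classes at closed points of $\Spec R$ are dense in $G_\iota$. For a closed point $s\in \Spec R$ with residue field $\FF_q$, the restriction of $\chi$ to $\pi_1^{\et,\ab}(\mathcal X_{\bar s})$ is a continuous rank one $p$-adic character whose isomorphism class is Frobenius-stable; the rank one case of de Jong's conjecture, i.e.~class field theory for $\mathcal X_s/\FF_q$ together with the Weil conjectures, then forces this restriction to be of finite order. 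I would combine these local constraints over enough closed points $s$ and pass through the Albanese of $X$ via Proposition~\ref{prop:constate} to force $\chi$ itself to have finite order.

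The hard part will be the last step: individual characters $\chi\in \varphi_\iota^{-1}(S)$ need not be fixed by any open subgroup of $G$---only the set is stable---so Faltings' theorem or de Jong cannot be applied directly to $\chi$. The key is to translate set-theoretic stability, together with the weight hypothesis on $\Pi_\QQ$, into an arithmeticity statement about the image $\varphi_\iota(\chi)\in \Mg^\Pi_K$ strong enough for Faltings and de Jong to apply. Once this is achieved, Laurent's theorem applied to each irreducible component of $S_{\bar K}$ yields the desired structure.
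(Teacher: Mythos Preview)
Your high-level strategy---density of torsion points plus Laurent's theorem---matches the paper. But the central step you propose cannot work as stated: you aim to show that \emph{every} $\chi\in\varphi_\iota^{-1}(S)$ has finite order, and this is simply false whenever $\dim S>0$. If $S$ is (say) a one-dimensional subtorus, then $\varphi_\iota^{-1}(S)\cap H^\pi_p(\rho)$ is a whole $p$-adic disc, on which only countably many characters are torsion. You recognise the obstruction yourself---individual $\chi$ are not $G$-stable, only the set is---but then still commit to the conclusion that ``$\chi$ itself'' is forced to be of finite order; no translation via Proposition~\ref{prop:constate} or the weight hypothesis can salvage this, because the target statement is false. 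Relatedly, once you fix a single $\iota$ at the outset you throw away the ``infinitely many embeddings'' hypothesis, which is the one tool available for producing torsion points near an \emph{arbitrary} point of $S$.

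The paper's argument avoids individual characters altogether and works with the integral model $\mathcal S\subset\Mg^\Pi_{\sO_K}$. Given any proper closed $V\subsetneq S$, one picks a closed point $s\in\mathcal S\setminus\overline V$ at which $\mathcal S$ is smooth over $\ZZ$ and whose residue characteristic $p$ admits an embedding $\iota$ with the stability property---this is exactly where the infinitude of good $\iota$ is spent. The completed local ring $R=\widehat{\sO}_{\mathcal S,s}$ is then a $G$-stable quotient of the Iwasawa algebra $W(k(s))\llbracket\pi^{(p)}/\mathrm{tor}\rrbracket$. Spreading $X$ over a base $\mathcal W/\ZZ[1/p]$ and choosing a closed point $w\in\mathcal W$, class field theory for $\mathcal X_w$ makes the Frobenius coinvariants $\pi^{(p)}_{\mathrm{Fr}_w}$ finite; de Jong's observation then forces $\bar R=R/(\mathrm{Fr}_w-\mathrm{id})$ to be finite flat over $\ZZ_p$, and any minimal prime of $\bar R$ yields a torsion point of $S\setminus V$. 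The key conceptual shift is that de Jong's finiteness is applied not to a single character but to the \emph{ideal} cutting out $\mathcal S$ inside the Iwasawa algebra, which is what the set-theoretic $G$-stability actually controls.
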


\begin{proof}
By what is explained in Section~\ref{sec:global} we can assume without loss of generality
that $X$ is smooth over $F$.  We will show that the torsion points are dense in $S$. Then the corollary follows from
  the following version of the Mordell--Lang conjecture, proved by M.~Laurent~\cite[Thm.~2]{Lau84}.

  \begin{proposition}\label{prop.morlan}
Let $T$ be a split torus over $K$ and let $S\subset T$ be a closed subset such that
$T_{\rm tor}\cap S$ is dense in $S$. Then $S_{\overline K}$ is a finite union of subtori of
$T_{\overline K}$
translated by torsion points. 
    \end{proposition}

Let $V\subsetneq S$ be a closed subset. We want to find a torsion point on $S\setminus V$.
Let $\mathcal S$ be the closure of $S$ in $\Mg^\Pi_{\sO_K}$ and $\mathcal V$ be that of $V$. Choose a closed point $s\in
\mathcal S\setminus \mathcal V$  which satisfies the following two conditions
\begin{itemize}
  \item[1)]
    $\mathcal S$ is smooth over $\ZZ$ at $s$ and
    \item[2)] a
$\iota:K\hookrightarrow \QQp$ which induces the place of $K$ corresponding to the image of
$s$ in $\Spec(\sO_K)$ has the property that $\varphi_\iota^{-1}(S)$ is stabilized by an
open subgroup of $G$.
\end{itemize}

Then the completion  $R$ of
$\sO_{\mathcal S, s}$ is canonically a quotient of the Iwasawa algebra
$W(k(s))\llbracket \pi^{(p)}/{\rm
  tor}\rrbracket$   by an ideal stabilized by the action of  an open subgroup of $G$ on $\pi^{(p)}$.
Here $W(k(s))$ is the ring of Witt vectors of the residue field $k(s)$  of $s$. We may assume
without loss of generality that this open subgroup is equal to $G$. By smoothness in 1) we
see that
$\sO_{\mathcal S, s}$ is isomorphic to a formal power series ring $W(k(s))\llbracket
T_1,\ldots , T_d \rrbracket$.

By~\cite[Th.Fin.~1.9]{Del77} there exists a connected normal scheme $\mathcal W$ of finite type over
$\mathbb Z[1/p]$ with $k(\mathcal W )=F$ and a smooth scheme $f:\mathcal X\to \mathcal W$ with
generic fibre $X$ such that  $R^1f_* \ZZ/p\ZZ$ is locally constant and compatible with base
change. Then the action of $G$ on $\pi^{(p)}$ factors through $\pi_1(\mathcal W)$.

Choose a closed point $w\in \mathcal W$ and
let ${\rm Fr}_w\in \pi_1(\mathcal W)$ be the corresponding Frobenius. Class field theory
implies the finiteness of the Frobenius coinvariants $\pi^{(p)}_{\mathrm{Fr}_w}$  of the maximal $p$-adic quotient  of
the abelian \'etale fundamental group $\pi^{(p)}\cong \pi_1^{\rm ab}(\mathcal X_{\bar
  w})^{(p)}$, see~\cite[Thm.~1.3.1]{Del80}.
 De Jong's observed in~\cite[Sec.~3]{deJ01}  that this finiteness implies that the (non-zero)
quotient ring
\[
  \bar R= R /({\rm
    Fr}_w(x)-x | x\in R)= R/(T_1-{\rm Fr}_w(T_1), \ldots ,T_d-{\rm Fr}_w(T_d) )
\]
is finite  and flat over $\ZZ_p$. Indeed,  otherwise $\dim(\bar R/(p))$  would be $>0$ and
the canonical character \[\pi^{(p)}_{\mathrm{Fr}_w}\to (  \bar R/(p))^\times \] would have infinite image, contradicting
the above observation. 
A minimal prime ideal of $\bar
R$ finally gives rise to a torsion point of $S\setminus V$.
\end{proof}


\begin{thebibliography}{BBDE04}
 \bibitem[BBD82]{BBD82} Beilinson, A., Bernstein, J. , Deligne, D.: {\it Faisceaux pervers}, In Analysis and Topology on Singular Spaces, I (Luminy,1981),  Ast\'erisque {\bf 100} (1982),
  5--171,  Soc. Math. France, Paris.
 \bibitem[Bog80]{Bog80} Bogomolov, F.: {\it Sur l'alg\'ebricit\'e des repr\'esentations $\ell$-adiques}, C.R. Acad. Sc. Paris {\bf 290} (1980), 701--703.
 \bibitem[Bor84]{Bor84}  Borel, A.: {\it  Sheaf Theoretic Intersection Cohomology}, in {\it Intersection Cohomology}, Progress in Mathematics {\bf 50} (1984), Birk\"auser Verlag, 47--182.
 \bibitem[BW17]{BW17}  Budur, N., Wang, B.: {\it Absolute sets and the Decomposition Theorem},
   {\url{ arXiv:1702.06267}}, to appear in Ann. \'Ec. Norm. Sup.
 \bibitem[Cas86]{Cas86} Cassels, J.W.S.: { \it Local Fields}, London Mathematical Society Student Texts  {\bf 3} (1986), Cambridge University Press.
\bibitem[deJ01]{deJ01} de Jong, A. J.: {\it
A conjecture on arithmetic fundamental groups},
Israel J. Math. {\bf 121} (2001), 61--84. 
 \bibitem[Del74]{Del74} Deligne, P.: {\it Th\'eorie de Hodge, III}, Publ. math. de
   l'I.H.\'E.S {\bf 44} (1974), 5--77.
   \bibitem[Del77]{Del77} Deligne, P.: {\it
Cohomologie \'etale},
 SGA 4 1/2, Lecture Notes in Mathematics, 569. Springer-Verlag, 1977.
 \bibitem[Del80]{Del80} Deligne, P.: {\it La conjecture de Weil II}, Publ. math. de l'I.H.\'E.S {\bf 52} (1980), 137-252.
\bibitem[Dr01]{Dr01} Drinfeld, V.: {\it On a conjecture of Kashiwara}, 
Math. Res. Lett. {\bf 8} (2001), 713--728. 
 \bibitem[EG18]{EG18} Esnault, H., Groechenig, M.: {\it  Cohomologically rigid local systems and integrality},  Selecta Mathematica {\bf 24} (2018) 5, 4279--4292.
\bibitem[Fal86]{Fal86} Faltings, G.: {\it Complements to Mordell},  in Faltings, G., W\"ustholz, G.: {\it  Rational Points}, Aspects of Math. {\bf E6}, Vieweg Verlag  (1986) 203--227.
  \bibitem[FvdP04]{FvdP04} Fresnel, J., van der Put, M.: {\it Rigid analytic geometry and its applications},  Progress in Mathematics {\bf 218} (2004), 296 p.,  Birkh\"auser Verlag.
  \bibitem[GL87]{GL87} Green, M, Lazarsfeld, R.: {\it Deformation theory, generic vanishing theorems, and some conjectures of Enriques, Catanese and
Beauville}, Invent. math. {\bf 90} (1987) 2, 389--407.
 \bibitem[Jan95]{Jan95} Jannsen, U.: {\it Mixed Motives, Motivic Cohomology,  and Ext-Groups}, Proceedings of the International Congress of Mathematicians 1994,  667--679, Birkh\"auser Verlag (1995).
\bibitem[Lau84]{Lau84} Laurent, M.: {\it
\'Equations diophantiennes exponentielles},
Invent. Math. {\bf 78} (1984), no. 2, 299--327. 
 \bibitem[Lit18]{Lit18} Litt, D.: {\it Arithmetic representations of fundamental groups I}, Inventiones math. {\bf 214} (2018), 605--639.
\bibitem[PR04]{PR04} Pink, R., R\"ossler, D.: {\it A conjecture of Beauville and Catanese revisited}, Math. Ann. {\bf 330}  (2004)  2, 293--308.
 \bibitem[Schn15]{Schn15} Schnell, C.: {\it Holonomic $D$-modules on abelian varieties},  Publ. Math. I.H.\'E.S. {\bf 121} (1) (2015), 1--55.
 \bibitem[Ser58]{Ser58} Serre, J.-P.: {\it Morphismes universels et vari\'et\'e d'Albanese}, S\'eminaire Claude Chevalley {\bf 4} (1958-1959), exp. 10, 1--22.
\bibitem[Sim90]{Sim90} Simpson, C.:  {\it Nonabelian Hodge Theory}, Proceedings of the International Congress of Mathematicians, Vol. I, II Kyoto (1990), 747--756, Math. Soc. Japan, Tokyo, 1991. 
 \bibitem[Sim93]{Sim93} Simpson, C.: {\it Subspaces of moduli spaces of rank one local systems}, Annales de l'\'E. N. S. 4\`eme s\'erie {\bf 26} 3 (1993), 361--401.
\bibitem[EGAIII]{EGAIII} Grothendieck, A.: {\it \'El\'ements de g\'eom\'etrie alg\'ebrique
    : III. \'Etude cohomologique des faisceaux coh\'erents, Seconde Partie}, Publications Math\'ematiques de l'I.H.\'E.S {\bf 17}  (1963).
\bibitem[EGAIV]{EGAIV} Grothendieck, A.: {\it \'El\'ements de g\'eom\'etrie alg\'ebrique :
    IV. \'Etude locale des sch\'emas et des morphismes de sch\'emas, Premi\`ere partie},
  Publications Math\'ematiques de l'I.H.\'E.S {\bf 20}  (1964).
\bibitem[dJStack]{dJStack} Stacks Project, {\url{https://stacks.math.columbia.edu}}
\end{thebibliography}
 \end{document}